\documentclass[11pt]{amsart}
\usepackage{amsmath,amsfonts,amssymb,upgreek, amscd, mathtools}
\setlength{\unitlength}{0.4in}

\setlength{\topmargin}{-9mm}
\setlength{\textheight}{9.2in}
\setlength{\textwidth}{6.6in}
\setlength{\oddsidemargin}{-0.1in}
\setlength{\evensidemargin}{-0.1in}

\setlength{\abovedisplayshortskip}{3mm}
\setlength{\belowdisplayshortskip}{3mm}

\vskip.1in

\usepackage[breaklinks]{hyperref}

\theoremstyle{thrm}
\usepackage{tikz}
\usepackage{enumerate}
\usetikzlibrary{calc,decorations.markings}
\usepackage{tikz-cd}
\usepackage{graphicx}

\theoremstyle{plain}

\newtheorem{thm}{Theorem}[section]

\newtheorem{prop}[thm]{Proposition}

\newtheorem{cor}[thm]{Corollary}

\newtheorem{question}[thm]{Question}
\newtheorem{defn}[thm]{Definition}
\newtheorem{example}[thm]{Example}
\newtheorem*{ack}{Acknowledgement}

\theoremstyle{definition}
\newtheorem{remark}[equation]{Remark}
\newcommand{\Ad}{\operatorname{Ad} }
\newcommand{\e}{\operatorname{e} }
\newcommand{\ad}{\operatorname{ad} }
\newcommand{\Der}{\operatorname{Der} }

\newcommand{\RBO}{\operatorname{RBO} }
\newcommand{\Sub}{\operatorname{Sub} }
\newcommand{\Exp}{\operatorname{Exp} }

\newcommand{\I}{\operatorname{I} }

\newcommand{\Aut}{\operatorname{Aut} }

\numberwithin{equation}{section}

\subjclass[2010]{17B38, 17B40, 16T25, 16Z05, 22E60 }

\keywords{Rota--Baxter group ; Rota--Baxter Lie algebra; skew braces ;Heisenberg Group; Heisenberg Lie algebra; Exponential map}
\begin{document}
	
	\title{Rota--Baxter operators and skew left brace structures over Heisenberg Group}

	\author{Nishant Rathee}
	\address{Department of Mathematical Sciences, Indian Institute of Science Education and Research (IISER) Mohali, Sector 81, SAS Nagar, P O Manauli, Punjab 140306, India} 
	\email{nishantrathee@iisermohali.ac.in, monurathee2@gmail.com}

\begin{abstract}
Rota–Baxter operators on groups have been recently defined in \cite{LHY2021}, and they share a close connection with skew braces, as demonstrated in \cite{VV2022}. In this paper, we classify all Rota–Baxter operators of weight 1 on the Heisenberg Lie algebra of dimension 3 up to the Jordan canonical form of a \(2 \times 2\) matrix block by automorphisms of the Heisenberg Lie algebra. Using the fact that the exponential map from the Heisenberg Lie algebra to the Heisenberg group is bijective, we induce these operators on the Heisenberg group. Finally, we enumerate all skew left brace structures on the Heisenberg group induced by these Rota–Baxter operators.
\end{abstract}	
	
	\maketitle
	
	\section{Introduction}
	G. Baxter first introduced Rota--Baxter operators in 1960, within the context of fluctuation theory in probability, presenting them as a generalization of the integral operator concept in the algebra of continuous functions. Over the following two decades, these operators have gained significant prominence, playing crucial roles in various domains such as combinatorics, operad splitting, classical solutions of the Yang--Baxter equation, and applications in Poisson geometry and mathematical physics, see \cite{GCR1969}, \cite{PBG17}, \cite{UK08}. L. Guo, H. Lang, and Y. Sheng defined  Rota--Baxter operators of weight $1$  on Lie groups in \cite{LHY2021}. Their work involved the analysis of smooth Rota--Baxter operators on Lie groups and demonstrated that such operators are differentiable to Rota--Baxter operators of weight $1$ on the associated Lie algebras.  The concept of Rota--Baxter operators has been generalized by Jiang, Sheng, and Zhu \cite{JYC22} to include relative Rota--Baxter operators on Lie groups. Additionally, another generalization of Rota--Baxter operators to Clifford semigroups, with applications to set-theoretical solutions of the Yang--Baxter equation, has been explored in \cite{CMP}.
	
Skew left braces are algebraic structures that Vendramin and Guarnieri introduced in \cite{GV17}, and they share  a close connection with non-degenerate set-theoretical solutions of the Yang--Baxter equation. V.G. Bardakov and V. Gubarev explored Rota--Baxter operators over  abstract groups in \cite{VV2022}, \cite{VV2023}, where they established the link between Rota--Baxter groups and  skew left braces. They proved that every Rota--Baxter operator on a group induces a skew left brace. Moreover, they established that every skew left brace can be embedded into a Rota--Baxter group, laying the groundwork for inquiries into the specific skew left braces that Rota--Baxter operators can induce. This question was addressed by A. Caranti and L. Stefanello in \cite{AS22}. More specifically, they identified certain skew left braces for which, despite their associated 
	$\lambda$-map lying within the subgroup of inner automorphisms, cannot be derived from Rota--Baxter operators. 
	
While the classification of Rota--Baxter operators across algebras and Lie algebras has been thoroughly addressed, for example see \cite{JH18, PBG14, PBG17, ZGR19}. Moreover, an algorithm has been developed  in \cite{NM1} to classify Rota--Baxter operators over finite groups of small order. However, examples or classifications of Rota--Baxter operators over Lie groups are not widely known.   The following theorem for Rota--Baxter operators  closely resembles the well-known relationship between the homomorphisms of Lie groups and Lie algebras, that is, if there is a homomorphism at the level of Lie groups, then its tangent map at the identity induces a Lie algebra homomorphism.

\begin{thm}\cite[Theorem 2.10]{LHY2021}
Let \(G\) be a Lie group and \(\mathfrak{g} = T_{e}G\). If \(\mathcal{R}: G \rightarrow G\) is a Rota--Baxter operator, then  
\[ R = \mathcal{R_{*}}_{e}: \mathfrak{g} \rightarrow \mathfrak{g} \]
the tangent map of \(\mathcal{R}\) at the identity \(e\), defines a Rota--Baxter operator on the Lie algebra \(\mathfrak{g}\).
\end{thm}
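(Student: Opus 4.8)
The plan is to differentiate the defining identity of the group Rota--Baxter operator twice at the identity and to read off the Lie-algebra relation from the mixed bidegree term. Recall that $\mathcal{R}$ being a Rota--Baxter operator of weight $1$ on $G$ means
\[
\mathcal{R}(g)\mathcal{R}(h) = \mathcal{R}\!\left(g\,\mathcal{R}(g)\,h\,\mathcal{R}(g)^{-1}\right) \qquad (g,h\in G).
\]
Setting $g=h=e$ gives $\mathcal{R}(e)^2=\mathcal{R}(e)$, hence $\mathcal{R}(e)=e$; consequently the smooth map $z\mapsto \log\mathcal{R}(\exp z)$, defined near $0\in\mathfrak g$, vanishes at $0$ and admits a Taylor expansion
\[
\log\mathcal{R}(\exp z) = R(z) + Q(z) + O(\|z\|^3),
\]
where $R=\mathcal{R}_{*e}$ is its (linear) differential and $Q$ is the homogeneous quadratic part, with associated symmetric bilinear map $\widetilde Q(x,y):=Q(x+y)-Q(x)-Q(y)$.

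First I would substitute $g=\exp(sx)$ and $h=\exp(ty)$ and expand both sides of the defining identity, taking logarithms and using the Baker--Campbell--Hausdorff formula together with the expansion of $\Ad_{\exp A}$ to second order. On the left, the only contribution to the $st$-coefficient comes from the bracket term of BCH, giving $\tfrac12[R(x),R(y)]$. On the right, writing $w=g\,\mathcal{R}(g)\,h\,\mathcal{R}(g)^{-1}$, a short computation yields
\[
\log w = sx + ty + st\!\left([R(x),y]+\tfrac12[x,y]\right) + (\text{pure } s^2,\ t^2,\ \text{and higher-order terms}),
\]
and then applying $R+Q$ shows that the $st$-coefficient of $\log\mathcal{R}(w)$ equals $R\!\left([R(x),y]+\tfrac12[x,y]\right)+\widetilde Q(x,y)$, since $R$ is linear and the quadratic part contributes through its polarization. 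Equating the two $st$-coefficients produces
\[
\tfrac12[R(x),R(y)] = R\!\left([R(x),y]+\tfrac12[x,y]\right) + \widetilde Q(x,y).
\]

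The key device is then to antisymmetrize. Swapping $x\leftrightarrow y$ in the last identity and using that $\widetilde Q$ is symmetric while every bracket is antisymmetric, I would subtract the two relations; the unwanted quadratic term $\widetilde Q(x,y)$ cancels and the surviving identity is exactly
\[
[R(x),R(y)] = R\!\left([R(x),y]+[x,R(y)]+[x,y]\right),
\]
which is the Rota--Baxter relation of weight $1$ on $\mathfrak g$. As a tangent map $R$ is automatically linear, so this establishes the theorem.

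I expect the main obstacle to be purely bookkeeping: organizing the second-order expansion so that exactly the terms of bidegree $st$ are retained, and being certain that the quadratic part $Q$ of $\mathcal{R}$ enters only through its polarization $\widetilde Q$ and that no stray $s^2t$ or $st^2$ terms leak into the computation. Once the expansion is set up correctly, the antisymmetrization removes the single obstruction $\widetilde Q$ in one stroke, and no information about $Q$ itself is needed --- which is precisely why the differential $R$ alone suffices to produce a Rota--Baxter operator on $\mathfrak g$.
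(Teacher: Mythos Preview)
The paper does not give its own proof of this theorem: it is stated twice and both times attributed to \cite[Theorem 2.10]{LHY2021} without argument. So there is nothing in the paper to compare your approach against.

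That said, your proposal is correct and is essentially the standard way to prove such a statement. The computation you outline is accurate: substituting $g=\exp(sx)$, $h=\exp(ty)$ and using BCH on both sides, the $st$-coefficient of the logarithm of the left side is $\tfrac12[R(x),R(y)]$, while on the right side the inner expression $w$ has $\log w = sx+ty+st\bigl([R(x),y]+\tfrac12[x,y]\bigr)+O(3)$ (in fact there are no pure $s^2$ or $t^2$ terms at degree two, but your caution does no harm), and applying $R+Q$ gives the $st$-coefficient $R\bigl([R(x),y]+\tfrac12[x,y]\bigr)+\widetilde Q(x,y)$. The antisymmetrization device then kills the symmetric $\widetilde Q$ and yields the weight-$1$ Rota--Baxter identity exactly as you claim. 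One implicit hypothesis you rely on is that $\mathcal{R}$ is smooth (so that $\log\mathcal{R}\exp$ has a Taylor expansion); this is indeed the setting of \cite{LHY2021}, and is necessary for the tangent map $\mathcal{R}_{*e}$ even to exist, so there is no gap.
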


The motivation of this paper is to explore the converse of this theorem, more precisely, inducing Rota--Baxter operators from a Lie algebra to its corresponding Lie group. This task presents significant challenges, even in the context of homomorphisms, and is possible only for simply connected groups. 
In this paper, we present the converse of the above theorem for the Heisenberg Lie group. Although a local Lie theory for simply connected relative Rota--Baxter groups is provided in \cite{JYC22}, it induces the relative Rota--Baxter operator only in some neighborhood of the identity. We will be using the same formula to induce Rota--Baxter operators on the Heisenberg group. This is achieved by first classifying all Rota--Baxter operators of weight $1$ over its Lie algebra, and then we use the properties of the associated exponential map to transfer these operators to the Heisenberg Group. A similar situation arises when examining skew left brace structures over Lie groups. Skew left brace structures over Lie groups are not widely known, which presents another layer of motivation for this study. This work aims to explore and list skew left brace structures over the Heisenberg group, focusing primarily on those induced by Rota--Baxter operators.

The paper is organized as follows: Section 2 introduces the necessary preliminaries about Rota--Baxter groups, Rota--Baxter Lie algebras, and skew left braces. Section 3 is dedicated to the exponential map of the semi-direct product of Lie groups, where we explicitly determine the exponential map for the Lie group \(G \ltimes_{\Ad} G\) (Theorem \ref{semiexp}). In Section 4, we identify all Rota--Baxter operators of weight 1 on the Heisenberg Lie algebra of dimension three. We obtain the matrices for these Rota--Baxter operators by directly solving the operators defining equations, using Mathematica \cite{math}. In Section 5, we transfer these Rota--Baxter operators to the Heisenberg Lie group (Theorem \ref{RBOHLG}) and ultimately enumerate all the skew left brace structures induced by these Rota--Baxter operators.

	\section{Preliminaries}
	
	In this section, we shall revisit basic results and observations concerning Rota--Baxter groups and their connection with left skew braces. Our main references are \cite{LHY2021,VV2022}.
	
	\subsection{Rota--Baxter groups and skew left braces}
	
	\begin{defn}
		Let $(G, \cdot)$ be a group. A set map $\mathcal{R}: G \rightarrow G$ is said to be Rota--Baxter operator  on the group $G$ if 
		\begin{align*}
			\mathcal{R}(x) \cdot \mathcal{R}(y)= \mathcal{R}(x \cdot \mathcal{R}(x) \cdot y \cdot \mathcal{R}(x)^{-1}), \text{for all } x, y \in G.
		\end{align*}
		A group $(G, \cdot)$ equipped with a Rota--Baxter operator  is called a Rota--Baxter group. We denote a Rota--Baxter group simply by the pair $(G,\mathcal{R})$.
	\end{defn}

\begin{defn}
Two Rota--Baxter operators	$\mathcal{R}$ and $\mathcal{B}$ on a group \(G\) are said to be equivalent if there exists an isomorphism \(\Psi\) of \(G\) such that \(\Psi \mathcal{R} \Psi^{-1} = \mathcal{B}\).
\end{defn}

\begin{example}\label{groupexample}
	Let $G$ be a group. Then the following are examples of Rota--Baxter operators
	\begin{itemize}
		\item[i)] The map $\mathcal{R}_e(x)=e$ is a Rota--Baxter operator on $G$, where $e$ is identity of $G$.
		
		\item[ii)] The map $\mathcal{R}_{-1}(x)=x^{-1}$ is a Rota--Baxter operator on $G$.
		
		\item[ii)] Given an exact factorization  $G=HL$, the map $\mathcal{R} : G \rightarrow G$, defined as  $\mathcal{R}(hl)=l^{-1} $ is a Rota--Baxter operator on $G$.
	\end{itemize}
\end{example}

Let $G$ be a group, and let the adjoint map $\Ad: G \rightarrow \Aut(G)$ be defined by $\Ad_x(y)=xyx^{-1}$ for all $x,y \in G$. Then, we can define the semi-direct product $G \ltimes_{\Ad} G$, where the group operation is defined as follows:
\begin{align*}
	(x_1, y_1) (x_2, y_2) := (x_1 x_2, y_1 \Ad_{x_1}(y_2)) = (x_1 x_2, y_1 x_1 y_2 x_1^{-1}),
\end{align*}
for all $x_1, x_2, y_1, y_2 \in G.$ 

Next, we provide a sketch of the proof regarding the relationship between subgroups of $G \ltimes_{\Ad} G$ and Rota--Baxter operators on $G$. For further details, we recommend that the reader refer to \cite[Proposition 2.13]{LHY2021} and \cite[Proposition 3.9]{NM1}.

\begin{prop}\label{grptorb}
Let $G$ be a group. Then Rota--Baxter operators on $G$ are in one-to-one correspondence with subgroups $H$ of $G \ltimes_{\Ad} G$ such that the projection of the second coordinate of $H$ onto $G$ is a bijection.
\end{prop}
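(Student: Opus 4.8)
The plan is to realize the correspondence explicitly as a graph construction and to observe that the Rota--Baxter identity is \emph{precisely} the condition that turns this graph into a subgroup. Given a Rota--Baxter operator $\mathcal{R}$ on $G$, I would associate to it the subset
\[
H_{\mathcal{R}} = \{(\mathcal{R}(x), x) : x \in G\} \subseteq G \ltimes_{\Ad} G.
\]
Since the second coordinate ranges over all of $G$ with each $x$ occurring exactly once, the projection onto the second coordinate is automatically a bijection, so the only real content on this side is that $H_{\mathcal{R}}$ is a subgroup. I would compute the product
\[
(\mathcal{R}(x), x)(\mathcal{R}(y), y) = \bigl(\mathcal{R}(x)\mathcal{R}(y),\, x \mathcal{R}(x) y \mathcal{R}(x)^{-1}\bigr)
\]
from the semidirect product operation and observe that its second coordinate is $z := x\mathcal{R}(x)y\mathcal{R}(x)^{-1}$ while its first coordinate is $\mathcal{R}(x)\mathcal{R}(y)$; by the defining identity of a Rota--Baxter operator this equals $\mathcal{R}(z)$, so the product lies in $H_{\mathcal{R}}$. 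This is the key step: the Rota--Baxter identity is literally the statement that $H_{\mathcal{R}}$ is closed under multiplication.

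For the remaining subgroup axioms I would first note that setting $x = y = e$ in the defining identity gives $\mathcal{R}(e)^2 = \mathcal{R}(e)$, hence $\mathcal{R}(e) = e$ and $(e,e) \in H_{\mathcal{R}}$. Since $G$ may be infinite, closure under inverses must be checked separately; the inverse of $(\mathcal{R}(x), x)$ in $G \ltimes_{\Ad} G$ is $\bigl(\mathcal{R}(x)^{-1},\, \mathcal{R}(x)^{-1} x^{-1}\mathcal{R}(x)\bigr)$, and writing $u := \mathcal{R}(x)^{-1}x^{-1}\mathcal{R}(x)$ I would apply the Rota--Baxter identity to the pair $(x,u)$: the argument $x\mathcal{R}(x)u\mathcal{R}(x)^{-1}$ collapses to $e$, so $\mathcal{R}(x)\mathcal{R}(u) = \mathcal{R}(e) = e$, giving $\mathcal{R}(u) = \mathcal{R}(x)^{-1}$ and hence the inverse equals $(\mathcal{R}(u), u) \in H_{\mathcal{R}}$.

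Conversely, given a subgroup $H$ for which the second-coordinate projection $\pi_2 \colon H \to G$ is a bijection, I would define $\mathcal{R} := \pi_1 \circ \pi_2^{-1}$, where $\pi_1$ is the first-coordinate projection; concretely $\mathcal{R}(x)$ is the unique element with $(\mathcal{R}(x), x) \in H$, so that $H = \{(\mathcal{R}(x), x): x \in G\}$. Running the product computation above in reverse, closure of $H$ under multiplication forces $\bigl(\mathcal{R}(x)\mathcal{R}(y),\, x\mathcal{R}(x)y\mathcal{R}(x)^{-1}\bigr) \in H$, and comparing first coordinates for the element of $H$ whose second coordinate is $z = x\mathcal{R}(x)y\mathcal{R}(x)^{-1}$ yields $\mathcal{R}(x)\mathcal{R}(y) = \mathcal{R}(z)$, which is exactly the Rota--Baxter identity. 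Finally I would check that the assignments $\mathcal{R} \mapsto H_{\mathcal{R}}$ and $H \mapsto \mathcal{R}$ are mutually inverse, which is immediate from the graph description.

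I do not expect a serious obstacle here, as the proposition is essentially a reformulation; the one point requiring genuine care is the infinite-group subtlety that closure under multiplication alone does not yield a subgroup, so the inverse computation above is the step I would make sure to include explicitly rather than leave to the reader.
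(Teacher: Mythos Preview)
Your proposal is correct and follows essentially the same graph-construction approach as the paper: associate to $\mathcal{R}$ the subset $\{(\mathcal{R}(x),x)\}$ and conversely recover $\mathcal{R}$ from a suitable subgroup via the second-coordinate bijection, then observe these are mutual inverses. The paper in fact only sketches the argument and omits the explicit verification of closure under inverses that you rightly flag as the one subtlety in the infinite case, so your version is a strictly more complete rendering of the same proof.
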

\begin{proof}
Let $\Sub(G)$ denote the set of subgroups $H$ of $G \ltimes_{\Ad} G$ such that the projection of the second coordinate of $H$ onto $G$ is a bijection, and let $\RBO(G)$ denote the set of all Rota--Baxter operators on $G$. For $\mathcal{R} \in \RBO(G)$ and $x \in G$ , define the mapping 
$$\psi: \RBO(G) \rightarrow \Sub(G) \mbox{ by } \mathcal{R} \mapsto \text{Gr}(\mathcal{R}):=(\mathcal{R}(x), x).$$

Let $H \in \Sub(G)$ and $x \in G$. Utilizing the fact that the projection of the second coordinate of $H$ onto $G$ is a bijection, there exists a unique $y \in G$ such that $(y, x) \in H$.  We can define $\mathcal{R}_H: G \rightarrow G$ given by $\mathcal{R}_H(x)=y$. It is easy to see that $R_H$ constitutes a Rota--Baxter operator on $G$. Thus, we have  the map 
$$
\phi: \Sub(G) \rightarrow \RBO(G) \mbox{ defined by } \phi(H)=\mathcal{R}_H.$$
We can easily verify that $\psi$ and $\phi$ are well-defined and inverses of each other, which proves the result.
 \end{proof}

	\begin{defn}
		An algebraic structure $(A, + , \circ)$ is said to be a skew left brace if $(A,  +)$ and $(A, \circ)$ are groups and the following compatibility condition holds
		\begin{equation}\label{bcomp}
			a \circ (b  + c ) = a \circ b -a + a\circ c
		\end{equation}
		\noindent for all $ a, b , c \in A$,  where $-a$ denotes the inverse of $a$ with respect to $+$.  
	\end{defn}		

	\begin{example}
	All groups with same  `$+$' and `$\circ$' operations are examples of trivial skew left braces.
\end{example}

\begin{example}
	A Radical ring $(R, +, \cdot)$ with $a \circ b = a+b +a \cdot b$ is a skew left brace.
\end{example}

\begin{example}
	Let $n \geq 2$ , then $A_{(n,d)}:=(\mathbb{Z}_{n}, +, \circ)$ is a skew left brace, where $d$ is a divisor of $n$ such that every prime divisor of $n$ divides $d$, and $ a \circ b = a+b+dab.$
\end{example}

Next we state the relationship between skew left braces and Rota--Baxter groups.

\begin{thm}\label{VVSB}\cite[Theorem 3.1]{VV2022}
Let $(G, \cdot)$ be a group and $\mathcal{R} : G \rightarrow G$ be a Rota--Baxter operator. Then the operation 
$x \circ_{\mathcal{R}} y:=x \cdot \mathcal{R}(x)\cdot y\cdot \mathcal{R}(x)^{-1}$ defines a group structure on $G$  called descendant group of Rota--Baxter group  $(G, \mathcal{R})$ and $(G, \cdot, \circ_{\mathcal{R}})$ forms a skew left brace.
\end{thm}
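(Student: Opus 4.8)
The plan is to prove the two assertions in turn, and the whole argument hinges on a single reformulation of the Rota--Baxter axiom. Observe that the defining identity $\mathcal{R}(x)\cdot\mathcal{R}(y)=\mathcal{R}(x\cdot\mathcal{R}(x)\cdot y\cdot\mathcal{R}(x)^{-1})$ is, by the very definition of $\circ_{\mathcal{R}}$, precisely the statement
\[\mathcal{R}(x\circ_{\mathcal{R}} y)=\mathcal{R}(x)\cdot\mathcal{R}(y).\]
In other words, $\mathcal{R}$ intertwines the new operation $\circ_{\mathcal{R}}$ with the group multiplication $\cdot$. Crucially, this holds as an identity of maps on the underlying set, \emph{before} any group axioms for $\circ_{\mathcal{R}}$ are known, so I am free to use it throughout. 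I expect this observation to be the real content; everything that follows is bookkeeping.

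First I would establish that $(G,\circ_{\mathcal{R}})$ is a group. For associativity, I would expand $(x\circ_{\mathcal{R}} y)\circ_{\mathcal{R}} z$ from the definition and replace $\mathcal{R}(x\circ_{\mathcal{R}} y)$ by $\mathcal{R}(x)\mathcal{R}(y)$ via the reformulated axiom; the inner factors $\mathcal{R}(y)^{-1}\mathcal{R}(y)$ cancel, and the expression collapses to $x\cdot\mathcal{R}(x)\cdot(y\circ_{\mathcal{R}} z)\cdot\mathcal{R}(x)^{-1}=x\circ_{\mathcal{R}}(y\circ_{\mathcal{R}} z)$. For the identity, I would first note that setting $x=y=e$ in the axiom forces $\mathcal{R}(e)^2=\mathcal{R}(e)$, hence $\mathcal{R}(e)=e$, whence $e\circ_{\mathcal{R}} x=x=x\circ_{\mathcal{R}} e$, so the old identity $e$ is a two-sided identity for $\circ_{\mathcal{R}}$. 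For inverses I would propose the candidate $\bar{x}:=\mathcal{R}(x)^{-1}\cdot x^{-1}\cdot\mathcal{R}(x)$ and check $x\circ_{\mathcal{R}}\bar{x}=e$ by direct substitution; applying $\mathcal{R}$ and using the homomorphism property yields $\mathcal{R}(\bar{x})=\mathcal{R}(x)^{-1}$, which I would feed back in to verify $\bar{x}\circ_{\mathcal{R}} x=e$ as well.

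Finally I would verify the skew left brace compatibility \eqref{bcomp}, which here reads $a\circ_{\mathcal{R}}(b\cdot c)=(a\circ_{\mathcal{R}} b)\cdot a^{-1}\cdot(a\circ_{\mathcal{R}} c)$, since $+$ is $\cdot$ and $-a=a^{-1}$. Expanding the left side gives $a\cdot\mathcal{R}(a)\cdot b\cdot c\cdot\mathcal{R}(a)^{-1}$, while the right side produces a central block $\mathcal{R}(a)^{-1}\cdot a^{-1}\cdot a\cdot\mathcal{R}(a)$ that collapses to the identity, leaving the identical expression; thus both sides agree. If I wished to bypass the associativity computation entirely, an alternative I would keep in mind is to transport the structure from Proposition \ref{grptorb}: the graph $\{(\mathcal{R}(x),x):x\in G\}$ is a subgroup of $G\ltimes_{\Ad}G$ whose multiplication reads off, in the second coordinate, as exactly $x\circ_{\mathcal{R}} y$, so the group axioms come for free from the ambient group. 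The only genuine subtlety is to remember that the homomorphism identity for $\mathcal{R}$ is available as a set-map identity before $(G,\circ_{\mathcal{R}})$ is known to be a group; once that is internalized, no step presents a real obstacle.
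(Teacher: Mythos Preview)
Your proof is correct in every detail: the key observation that the Rota--Baxter axiom reads $\mathcal{R}(x\circ_{\mathcal{R}}y)=\mathcal{R}(x)\cdot\mathcal{R}(y)$ is exactly the right pivot, and the verifications of associativity, identity, inverses, and the brace compatibility all go through as you describe. The paper itself offers no proof of this statement; it is quoted from \cite[Theorem~3.1]{VV2022} without argument, so there is nothing to compare against beyond noting that your direct verification is the standard one (and your alternative via the graph subgroup of $G\ltimes_{\Ad}G$ is essentially the content of Proposition~\ref{grptorb}).
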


	 \subsection{Rota--Baxter Lie algebra}
	 \begin{defn}
Let $(\mathfrak{g}, [\cdot, \cdot])$ be a Lie algebra. Then a linear map $R:  \mathfrak{g} \rightarrow \mathfrak{g}$ is called a Rota--Baxter operator if it satisfies 
$$[R(x), R(y)]=R\big([R(x), y]+ [x, R(y)]+ [x,y] \big),$$
for all $x, y \in \mathfrak{g}.$ A Lie algebra $\mathfrak{g}$ equipped with a Rota--Baxter operator  is called a Rota--Baxter Lie algebra. We denote a Rota--Baxter Lie algebra simply by the pair $(\mathfrak{g}, R)$.
 \end{defn}
	\begin{remark}
	 Note that the definitions of a Rota--Baxter operator provided for groups and Lie algebras are more precisely definitions of Rota--Baxter operators of weight $1$. However, throughout this article, we refer to a Rota--Baxter operator of weight $1$ simply as a Rota--Baxter operator on the corresponding group or Lie algebra.
	 \end{remark}
 
 \begin{defn}
 	Two Rota--Baxter operators	$R$ and $B$ on a group $\mathfrak{g}$ are said to be equivalent if there exists an isomorphism \(\psi\) of \(\mathfrak{g}\) such that \(\psi R \psi^{-1} =B\).
 \end{defn}

\begin{example}\label{lieexample}
	Let $\mathfrak{g}$ be a Lie algebra. Then the following are examples of Rota--Baxter operators
	\begin{itemize}
		\item[i)] The map $R(x)=0$ for all $x \in \mathfrak{g}$ is a trivial  Rota--Baxter operator on $\mathfrak{g}$. 
		
		\item[ii)] For $x \in   \mathfrak{g}$, the map $R_{-1}(x)=-x$ is a Rota--Baxter operator on $\mathfrak{g}$.
		
		\item[ii)] Let $\mathfrak{g}_{+}$ and $\mathfrak{g}_{-}$ be Lie subalgebras of $\mathfrak{g}$ such that $\mathfrak{g} = \mathfrak{g}_{+} \oplus \mathfrak{g}_{-}$. The map $B : \mathfrak{g} \rightarrow \mathfrak{g}$, defined as $B(x_{+} + x_{-}) = -x_{-}$, is a Rota--Baxter operator on $\mathfrak{g}.$
	\end{itemize}
\end{example}

\begin{thm}\cite[Theorem 2.10]{LHY2021}
Let $(G, \mathcal{R})$ be a Rota--Baxter Lie group. Let  $\mathfrak{g}= T_{e}G$ be the Lie algebra of $G$ and
$$ R = \mathcal{R_{*}}_{e}: \mathfrak{g} \rightarrow  \mathfrak{g}$$
the tangent map of $\mathcal{R}$ at the identity $e$. Then $(\mathfrak{g}, R)$ is a Rota--Baxter Lie algebra.
\end{thm}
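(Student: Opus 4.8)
The plan is to avoid differentiating the group identity head-on and instead exploit the subgroup correspondence of Proposition~\ref{grptorb} together with the functoriality of the Lie algebra construction. First I would record the normalization $\mathcal{R}(e)=e$, which follows by setting $x=y=e$ in the defining equation: one gets $\mathcal{R}(e)^2=\mathcal{R}(e)$, forcing $\mathcal{R}(e)=e$. This guarantees that the tangent map $R=(\mathcal{R}_*)_e\colon\mathfrak{g}\to\mathfrak{g}$ is well defined and, being a differential at a point, automatically linear. Next, since a Rota--Baxter operator on a Lie group is in particular a smooth map, its graph $\mathrm{Gr}(\mathcal{R})=\{(\mathcal{R}(x),x):x\in G\}$ is an embedded submanifold of $G\ltimes_{\Ad}G$ diffeomorphic to $G$ via the second projection, and by Proposition~\ref{grptorb} it is a subgroup. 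Hence $\mathrm{Gr}(\mathcal{R})$ is a Lie subgroup, and it carries a well-defined Lie subalgebra $\mathfrak{h}\subseteq\operatorname{Lie}(G\ltimes_{\Ad}G)$.

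The heart of the argument is then a bracket computation. I would first identify $\operatorname{Lie}(G\ltimes_{\Ad}G)$ with the semidirect product $\mathfrak{g}\ltimes_{\ad}\mathfrak{g}$, obtained by differentiating the action $\Ad$ to $\ad$; its bracket is
\[
[(A_1,B_1),(A_2,B_2)]=\big([A_1,A_2],\,[A_1,B_2]-[A_2,B_1]+[B_1,B_2]\big).
\]
Because $\mathrm{Gr}(\mathcal{R})$ is the image of $x\mapsto(\mathcal{R}(x),x)$ and $\mathcal{R}(e)=e$, its tangent space at $(e,e)$ is the graph of the differential, so $\mathfrak{h}=\{(R(X),X):X\in\mathfrak{g}\}$. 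Now $\mathfrak{h}$ is a Lie subalgebra, hence closed under the bracket above; applying the formula to $(R(X),X)$ and $(R(Y),Y)$ gives
\[
[(R(X),X),(R(Y),Y)]=\big([R(X),R(Y)],\,[R(X),Y]-[R(Y),X]+[X,Y]\big),
\]
and membership of this element in $\mathfrak{h}$ means precisely that its first coordinate is $R$ applied to its second coordinate. Rewriting $-[R(Y),X]=[X,R(Y)]$ then yields
\[
[R(X),R(Y)]=R\big([R(X),Y]+[X,R(Y)]+[X,Y]\big),
\]
which is exactly the weight-$1$ Rota--Baxter identity for $R$ on $\mathfrak{g}$.

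The main obstacle is the step identifying $\mathfrak{h}$ with the graph of $R$ and computing the semidirect-product bracket with the correct sign conventions: one must be careful that the first coordinate of $G\ltimes_{\Ad}G$ is the acting factor and the second the acted-upon factor, since the asymmetry of the bracket in $B_1,B_2$ is exactly what produces the three terms $[R(X),Y]$, $[X,R(Y)]$, $[X,Y]$ on the right-hand side. A secondary technical point is justifying that $\mathrm{Gr}(\mathcal{R})$ is a bona fide embedded Lie subgroup so that the Lie-subalgebra machinery applies; this uses smoothness of $\mathcal{R}$ and the fact that the graph of a smooth map is always embedded. As an alternative route one can prove the identity directly by differentiating the group relation $\mathcal{R}(\gamma(s))\,\mathcal{R}(\delta(t))=\mathcal{R}\big(\gamma(s)\,\mathcal{R}(\gamma(s))\,\delta(t)\,\mathcal{R}(\gamma(s))^{-1}\big)$ along one-parameter subgroups $\gamma(s)=\exp(sX)$ and $\delta(t)=\exp(tY)$ and extracting the mixed second-order term $\partial_s\partial_t|_{0}$; this works but is computationally heavier, as it requires controlling $\mathcal{R}_*$ at points other than $e$ and expanding $\Ad_{\mathcal{R}(\gamma(s))}$ to first order, so I would present the subgroup argument as the primary proof.
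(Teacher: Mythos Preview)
The paper does not supply its own proof of this theorem: it is quoted verbatim from \cite{LHY2021} (twice, in the introduction and in Section~2.2) and no argument is given. So there is nothing in the paper to compare your proposal against line by line.

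That said, your argument is correct and is in fact the proof that the surrounding material in the paper is set up to deliver. You are combining Proposition~\ref{grptorb} (the graph $\mathrm{Gr}(\mathcal{R})$ is a subgroup of $G\ltimes_{\Ad}G$) with the Lie functor to obtain a Lie subalgebra of $\mathfrak{g}\ltimes_{\ad}\mathfrak{g}$, and then invoking the ``Lie subalgebra $\Leftrightarrow$ Rota--Baxter operator'' direction of Proposition~\ref{liegraph}. The only points to be careful about are exactly the ones you flag: that $\mathcal{R}(e)=e$ so that $T_{(e,e)}\mathrm{Gr}(\mathcal{R})=\{(R(X),X):X\in\mathfrak{g}\}$, and that the semidirect-product bracket has the second component $[A_1,B_2]+[B_1,A_2]+[B_1,B_2]$ (the paper's displayed formula just before Proposition~\ref{liegraph} contains a typo in the last term). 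Your alternative route via differentiating the group identity along one-parameter subgroups is essentially the original proof in \cite{LHY2021}; the graph argument you give is cleaner and meshes better with how this paper is organized.
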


\begin{remark}
 The Rota--Baxter operators in Example \ref{lieexample} correspond to the tangent map of Rota--Baxter operators at identity in Example \ref{groupexample}.
\end{remark}

Let \((\mathfrak{g}, [\cdot, \cdot]_{\mathfrak{g}})\) be a Lie algebra and \(\Der(\mathfrak{g})\) denote the Lie algebra of derivations of \(\mathfrak{g}\). Recall that \(\ad: \mathfrak{g} \rightarrow \Der(\mathfrak{g})\) is defined by \(\ad_x(y) = [x, y]\) for \(x, y \in \mathfrak{g}\), representing the adjoint action. Using the adjoint action, we can construct the semi-direct product \(\mathfrak{g} \ltimes_{\ad} \mathfrak{g}\) of Lie algebras, where the Lie bracket is defined by
$$[(x_1, y_1), (x_2, y_2)]_{\mathfrak{g} \ltimes_{\ad}   \mathfrak{g}}=\big([(x_1, x_2)]_{\mathfrak{g}}, [x_1, y_2]_{\mathfrak{g}}+[y_1, x_2]_{\mathfrak{g}}+ [y_1, x_1]_{\mathfrak{g}}\big),$$
for all $x_1, x_2, y_1, y_2 \in \mathfrak{g}.$

Similar to Rota--Baxter operators on groups, we describe the relationship between Lie subalgebras $\mathfrak{g} \ltimes_{\ad} \mathfrak{g}$ and Rota--Baxter operators on $\mathfrak{g}$. For further details, we suggest referring to \cite[Proposition 2.5]{JYC22}.

\begin{prop}\label{liegraph}
Let $(\mathfrak{g}, [\cdot, \cdot])$ be a Lie algebra. Then Rota--Baxter operators on $\mathfrak{g}$ are in one-to-one correspondence with Lie subalgebras of  $\mathfrak{h}$ of $\mathfrak{g} \ltimes_{\ad} \mathfrak{g}$ such that the projection of the second coordinate from $\mathfrak{h}$ onto $\mathfrak{g}$ is  a bijection.
\end{prop}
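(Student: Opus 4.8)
**The plan is to mirror the group-level argument in Proposition \ref{grptorb}, replacing subgroups by Lie subalgebras and the Rota–Baxter group identity by its linearized version.** The statement is the Lie-algebraic analogue of Proposition \ref{grptorb}, so I expect the proof to proceed by constructing two mutually inverse maps between the set of Rota–Baxter operators on $\mathfrak{g}$ and the set of Lie subalgebras $\mathfrak{h}$ of $\mathfrak{g} \ltimes_{\ad} \mathfrak{g}$ whose second-coordinate projection onto $\mathfrak{g}$ is a bijection. For a Rota–Baxter operator $R$, the natural candidate is its graph
\[
\mathfrak{h}_R := \{(R(x), x) : x \in \mathfrak{g}\} \subseteq \mathfrak{g} \ltimes_{\ad} \mathfrak{g},
\]
and conversely, given such a subalgebra $\mathfrak{h}$, the bijectivity of the second projection lets me define $R_{\mathfrak{h}}(x) = y$ for the unique $y$ with $(y, x) \in \mathfrak{h}$.

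First I would verify that $\mathfrak{h}_R$ is a linear subspace: this is immediate since $R$ is a linear map, so the graph is the image of the linear embedding $x \mapsto (R(x), x)$, and its second projection is visibly a linear bijection onto $\mathfrak{g}$. Next I would check that $\mathfrak{h}_R$ is closed under the bracket of $\mathfrak{g} \ltimes_{\ad} \mathfrak{g}$. Computing
\[
[(R(x), x), (R(y), y)] = \big([R(x), R(y)],\, [R(x), y] + [x, R(y)] + [x, y]\big),
\]
I see that the first coordinate is $[R(x), R(y)]$ and the second is exactly the argument of $R$ appearing in the Rota–Baxter identity. Thus $\mathfrak{h}_R$ is a subalgebra precisely when $[R(x), R(y)] = R\big([R(x), y] + [x, R(y)] + [x, y]\big)$, which is the defining relation for $R$; this is the heart of the correspondence and the step where the Rota–Baxter axiom is used in full.

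For the reverse direction, given a subalgebra $\mathfrak{h}$ with bijective second projection $\pi_2 \colon \mathfrak{h} \to \mathfrak{g}$, I would define $R_{\mathfrak{h}} = \pi_1 \circ \pi_2^{-1}$, where $\pi_1$ is the first-coordinate projection. Linearity of $R_{\mathfrak{h}}$ follows because $\pi_2^{-1}$ and $\pi_1$ are linear maps, and the fact that $\mathfrak{h}$ is bracket-closed forces, by the same coordinate computation run in reverse, the Rota–Baxter identity for $R_{\mathfrak{h}}$. Finally I would confirm that the two assignments $R \mapsto \mathfrak{h}_R$ and $\mathfrak{h} \mapsto R_{\mathfrak{h}}$ are mutually inverse, which is a routine unwinding of the definitions.

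**The main obstacle is bookkeeping rather than conceptual:** the one genuinely delicate point is ensuring that bijectivity of the second projection is the correct and sufficient condition to recover a \emph{well-defined linear} map $R_{\mathfrak{h}}$, and that the bracket-closure equivalence holds as an \emph{if and only if} (not merely one direction). Because both directions of the bracket computation are symmetric, establishing the equivalence is clean once the coordinate formula for the semidirect-product bracket is written out carefully. I do not expect any subtlety beyond verifying that no hidden nonlinearity or multivaluedness arises, so the proof should follow the template of Proposition \ref{grptorb} almost verbatim.
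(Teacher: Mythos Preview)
Your proposal is correct and is precisely the approach the paper has in mind: the paper's proof consists of the single sentence ``The proof follows along the same lines as Proposition \ref{grptorb},'' and you have carried out that mirroring in full, replacing subgroups by Lie subalgebras and the group Rota--Baxter identity by its Lie-algebra version via the graph construction. Your bracket computation $[(R(x),x),(R(y),y)] = \big([R(x),R(y)],\,[R(x),y]+[x,R(y)]+[x,y]\big)$ is exactly the key step, and the rest is the routine bookkeeping you describe.
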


\begin{proof}
The proof follows along the same lines as Proposition \ref{grptorb}.
\end{proof}

\section{Lie groups and exponential map}
In this section, we revisit and establish some general facts about Lie groups and their exponential maps. Primarily, we review basic properties of the Heisenberg Group and Heisenberg Lie algebra. For more details, we refer the reader to \cite{BRCH15}.

\begin{prop}\label{semiiso}
Let $G$ be any group and $\mathfrak{g}$ be  any Lie algebra . Then the following holds:
	\begin{enumerate}
	\item $G \ltimes_{\Ad} G$ is isomorphic to $G \times G$
	\item $\mathfrak{g}  \ltimes_{\ad} \mathfrak{g}$ is isomorphic to $\mathfrak{g} \times \mathfrak{g}.$
	\end{enumerate}
\end{prop}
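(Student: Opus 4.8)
The plan is to write down explicit isomorphisms in both parts and check them by a single short computation apiece; the Lie-algebra map is just the infinitesimal version of the group map, and in each case the point is that the semidirect twist by the (inner) adjoint action can be absorbed by a shear of the second coordinate. For part (1), I would define $\Phi \colon G \ltimes_{\Ad} G \to G \times G$ by $\Phi(x,y) = (x, yx)$. This is a bijection, since $(a,b) \mapsto (a, ba^{-1})$ is a two-sided inverse (immediate to check both ways). The substance is that $\Phi$ is a homomorphism: on the one hand $\Phi\big((x_1,y_1)(x_2,y_2)\big) = \Phi\big(x_1 x_2,\, y_1 x_1 y_2 x_1^{-1}\big)$ has second coordinate $(y_1 x_1 y_2 x_1^{-1})(x_1 x_2) = y_1 x_1 y_2 x_2$, while on the other hand $\Phi(x_1,y_1)\,\Phi(x_2,y_2) = (x_1, y_1 x_1)(x_2, y_2 x_2)$ has second coordinate $y_1 x_1 y_2 x_2$ in $G \times G$. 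The first coordinates agree trivially, and the $x_1^{-1}$ coming from the conjugation cancels exactly against the $x_1$ produced by the shear, so $\Phi$ is an isomorphism.

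For part (2), I would take the infinitesimal analogue $\phi \colon \mathfrak{g} \ltimes_{\ad} \mathfrak{g} \to \mathfrak{g} \times \mathfrak{g}$ defined by $\phi(x,y) = (x, x+y)$, a linear bijection with inverse $(a,b) \mapsto (a, b-a)$. It then remains to verify that $\phi$ preserves brackets. Writing the semidirect-product bracket as $[(x_1,y_1),(x_2,y_2)] = \big([x_1,x_2],\, [x_1,y_2] + [y_1,x_2] + [y_1,y_2]\big)$ (the mixed adjoint terms $[x_1,y_2]+[y_1,x_2]$ together with the bracket $[y_1,y_2]$ of the second factor), applying $\phi$ folds the first coordinate into the second, yielding second component $[x_1,x_2] + [x_1,y_2] + [y_1,x_2] + [y_1,y_2]$; on the other side, $[\phi(x_1,y_1), \phi(x_2,y_2)]_{\times} = \big([x_1,x_2],\, [x_1 + y_1,\, x_2 + y_2]\big)$, and expanding the second entry by bilinearity gives the same four terms. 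Hence $\phi$ is a Lie algebra isomorphism.

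I do not expect a genuine obstacle here: each part collapses to one short verification, and part (2) is precisely the derivative at the identity of part (1). The only points that demand care are choosing the correct direction of the shear—$(x,y)\mapsto(x,yx)$ works while $(x,y)\mapsto(x,xy)$ does not—and, in part (2), recognizing that the cross terms $[x_1,y_2]+[y_1,x_2]$ of the semidirect bracket are exactly the mixed terms in the expansion of $[x_1+y_1,\,x_2+y_2]$, which is what lets the shear linearize the twist.
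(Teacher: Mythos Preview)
Your proof is correct and follows essentially the same approach as the paper: the paper writes down the shear maps $\Phi(h,g)=(gh,h)$ and $\phi(x,y)=(y+x,x)$ without further verification, which are your maps composed with the swap of the two output factors. Your explicit check is more detailed than what the paper provides, and your statement of the semidirect bracket (with $[y_1,y_2]$ rather than the paper's $[y_1,x_1]$, which appears to be a typo) is the correct one.
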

\begin{proof}
\begin{enumerate}
\item The map $\Phi: G \ltimes_{\Ad} G \rightarrow G \times G$, defined by $(h,g) \mapsto (gh, h)$ is the recquired isomorphism.
\item Similarly, the map $\phi:\mathfrak{g}  \ltimes_{\ad} \mathfrak{g} \rightarrow \mathfrak{g} \times \mathfrak{g}$, defined by $(x,y) \mapsto (y+x, x)$ is the recquired isomorphism.
\end{enumerate}
\end{proof}

\begin{remark}
Let  $\mathfrak{g}$ be the Lie algebra of Lie group $G$. It is well known that the Lie algebra of the Lie group $G \ltimes_{\Ad} G$ is $\mathfrak{g} \ltimes_{\ad} \mathfrak{g}$, and the Lie algebra of the Lie group $G \times G$ is $\mathfrak{g} \times \mathfrak{g}$. It is easy to see that the map $\phi$, defined in Proposition \ref{semiiso}, is the tangent map of $\Phi$ at the identity. 
\end{remark}

Let $G$ be a Lie group, and let $\mathfrak{g}= T_{e}G$ be the Lie algebra of $G$. Denote $\e_G: \mathfrak{g} \rightarrow G$ as the associated exponential map. Then, we know that the map $\exp_G: \mathfrak{g} \times \mathfrak{g} \rightarrow G \times G,$ defined by $\exp(x,y)=(e_G^x, e_G^y)$, is the exponential map of the Lie group $G \times G$. If $\Phi$ and $\phi$ be the maps defined in Proposition \ref{semiiso}. Then, the map $\Phi^{-1} \exp \phi$ is a map from $\mathfrak{g} \ltimes_{\ad} \mathfrak{g}$ to $G \ltimes_{\Ad} G$, explicitly given by $(x,y) \mapsto (e_G^x, e_G^{x+y}e_G^{-x})$ for all $x, y \in \mathfrak{g}.$ Next, we will prove that this map serves as the exponential map for the Lie group $G \ltimes_{\Ad} G$.

\begin{thm}\label{semiexp}
The exponential map, denoted by \(\Exp_G\), from the tangent space \(\mathfrak{g} \ltimes_{\ad} \mathfrak{g}\) of the Lie group \(G \ltimes_{\Ad} G\), is defined as \(\Exp(x, y) = (e_G^x, e_G^{x+y}e_G^{-x})\) for all \(x, y \in \mathfrak{g}\).
\end{thm}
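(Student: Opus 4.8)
The plan is to identify the exponential map of $G \ltimes_{\Ad} G$ by transporting the known exponential map of $G \times G$ through the isomorphism $\Phi$ of Proposition \ref{semiiso}, and then verify that the resulting candidate genuinely is the exponential map. The key structural fact I would invoke is the naturality of the exponential map with respect to Lie group homomorphisms: if $F: H_1 \to H_2$ is a homomorphism of Lie groups with tangent map $F_{*e} = f: \mathfrak{h}_1 \to \mathfrak{h}_2$, then the square relating $\e_{H_1}, \e_{H_2}, F$ and $f$ commutes, that is, $F \circ \e_{H_1} = \e_{H_2} \circ f$. Applying this to the isomorphism $\Phi: G \ltimes_{\Ad} G \to G \times G$, whose tangent map at the identity is $\phi$ (as recorded in the remark following Proposition \ref{semiiso}), gives $\Phi \circ \Exp_G = \exp \circ\, \phi$, where $\Exp_G$ denotes the exponential map of $G \ltimes_{\Ad} G$ and $\exp$ that of $G \times G$. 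Since $\Phi$ is an isomorphism, this rearranges to $\Exp_G = \Phi^{-1} \circ \exp \circ\, \phi$, which is exactly the map already computed in the discussion preceding the theorem.

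Concretely, the steps I would carry out are as follows. First I would recall the explicit forms: $\phi(x,y) = (y+x, x)$ by Proposition \ref{semiiso}(2), the product exponential $\exp(u,v) = (e_G^u, e_G^v)$, and the inverse isomorphism $\Phi^{-1}$, which one reads off from $\Phi(h,g) = (gh,h)$ as $\Phi^{-1}(a,b) = (b, ab^{-1})$. Then I would simply compose: starting from $(x,y)$, apply $\phi$ to get $(x+y, x)$, apply $\exp$ to get $(e_G^{x+y}, e_G^{x})$, and apply $\Phi^{-1}$ to obtain $(e_G^x,\, e_G^{x+y}(e_G^x)^{-1}) = (e_G^x, e_G^{x+y}e_G^{-x})$. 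This matches the claimed formula for $\Exp(x,y)$. The only subtlety is to make sure the composition order and the direction of $\Phi$ versus $\Phi^{-1}$ are tracked correctly, which is a routine bookkeeping matter once the naturality square is set up.

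The main conceptual point—and the place where a reader might want justification rather than mere computation—is \emph{why} $\Exp_G = \Phi^{-1} \circ \exp \circ\, \phi$ holds, i.e. the naturality of $\exp$ under the isomorphism together with the identification of $\phi$ as the differential of $\Phi$. The latter identification is already asserted in the remark following Proposition \ref{semiiso}, so I would cite it directly; the former is a standard functoriality property of the exponential map of Lie groups, following from the fact that a homomorphism carries one-parameter subgroups to one-parameter subgroups and hence intertwines the two exponentials. I would state this naturality explicitly (with a reference to \cite{BRCH15}) and then let the computation above close the argument. I do not anticipate any genuine obstacle here: the entire content is the transport-of-structure principle, and the result is essentially forced once one knows the exponential map of $G \times G$ and the tangent map of the isomorphism $\Phi$. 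The proof is therefore short, with the naturality statement being the single load-bearing ingredient.
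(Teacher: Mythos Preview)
Your argument is correct, but it follows a different route from the paper's own proof. You invoke the naturality of the exponential map under Lie group homomorphisms, applied to the isomorphism $\Phi$, to conclude directly that $\Exp_G = \Phi^{-1}\circ\exp\circ\,\phi$; the explicit formula then drops out of a three-line composition. The paper instead verifies the claim from first principles: it checks that for each fixed $(x,y)$ the curve $\zeta(t)=(e_G^{tx},\,e_G^{t(x+y)}e_G^{-tx})$ satisfies $\zeta(t+s)=\zeta(t)\zeta(s)$ in $G\ltimes_{\Ad}G$ and computes $\dot\zeta(0)=(x,y)$, so that $\zeta$ is the one-parameter subgroup through $(x,y)$ and hence $\zeta(1)=\Exp_G(x,y)$. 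Your approach is shorter and more conceptual, leaning on a standard functoriality statement (which you correctly flag as the single load-bearing ingredient); the paper's approach is more self-contained and avoids appealing to an external theorem, at the cost of a direct calculation of the homomorphism property and the derivative. Either is a perfectly valid proof; yours has the advantage that it also makes transparent \emph{why} the formula was guessed in the first place.
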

\begin{proof}
Straight forward calculations shows that for $x,y\in \mathfrak{g}$ and $t,s \in \mathbb{C}$, we have 
\begin{align}
\Exp_G((t+s)(x,y))=&\;\Exp_G(t(x,y)) \Exp_G(s(x,y))\label{firstproperty}\\
\Exp_G\big((x_1,y_1)+(x_2,y_2)\big)=& \;\Exp_G(x_1,y_1) \Exp_G(x_2,y_2), \mbox{ if } [(x_1,y_1),(x_2,y_2)]=0. \nonumber
\end{align}
	
For a given $x,y \in \mathfrak{g}$, define $\zeta: \mathbb{C} \rightarrow  G \ltimes_{\Ad} G$ by 
$$ \zeta(t)=(e_G^{tx},  e_G^{t(x+y)}e_G^{-tx}) $$
It follows from \eqref{firstproperty}  that $\zeta$ is morphism of Lie groups and further we have
\begin{align*}
\dot{\zeta}(t)=\frac{d \big(\zeta(t)\big)}{dt}=& (x e_G^{tx},  (x+y)e_G^{t(x+y)}e_G^{-tx}-x e_G^{t(x+y)} \e^{-tx}).
\end{align*}
Putting $t=0$, in $\dot{\zeta}(t)$, we have $\dot{\zeta}(0)=(x,y).$ This shows that $\zeta$ is one-parameter subgroup of \(G \ltimes_{\Ad} G\) whose tangent vector at identity is equal to $(x,y).$

\end{proof}

\subsection{Heisenberg group and Lie algebra}	

Recall that the Heisenberg group $H$ of $3 \times 3$ matrices over $\mathbb{C}$ is a matrix Lie group containing  matrices of the form
\[ \left[
\begin{matrix}
	1 & a & c \\
	0 & 1 & b \\
	0 & 0 & 1 \\
\end{matrix}
\right]
\]
and corresponding Heisenberg  Lie algebra $\mathfrak{h}$  contains  matrices of the form 
\[ \left[
\begin{matrix}
	0 & a & c \\
	0 & 0 & b \\
	0 & 0 & 0 \\
\end{matrix}
\right]
\]
where $a, b, c \in \mathbb{C}$. 

The following facts are well-known 

\begin{enumerate}
\item  The elements
$$X=\left[ \begin{matrix}
	0 & 1 & 0 \\
	0 & 0 & 0 \\
	0 & 0 & 0 \\
\end{matrix} \right],  
\hspace{.1cm}
Y= \left[\begin{matrix}
	0 & 0 & 0 \\
	0 & 0 & 1 \\
	0 & 0 & 0 \\
\end{matrix}\right],
\hspace{.1cm}
Z=\left[\begin{matrix}
	0 & 0 & 1 \\
	0 & 0 & 0 \\
	0 & 0 & 0 \\
\end{matrix}\right]
$$
constitute a basis of $\mathfrak{h}$ and satisfy the following 
\begin{align}\label{basis}
[X,Y]=Z, \hspace{.2cm} [X,Z]=[Y,Z]=0.
\end{align}

\item  The exponential map $\e_H: \mathfrak{h} \rightarrow H$ is given by 
\begin{align}\label{expmapexpression}
\e_H^x=\I+x+\frac{x^2}{2},
\end{align}
for all  $x \in \mathfrak{h}$, where $\I$ represent the $3 \times 3$ identity matrix. More precisely, if 
$x= \left[ \begin{matrix}
	0 & a & c \\
	0 & 0 & b \\
	0 & 0 & 0 \\
\end{matrix} \right]$ then $\e_H^x= \left[ \begin{matrix}
1 & a & c+ab/2 \\
0 & 1 & b \\
0 & 0 & 1 \\
\end{matrix}\right]
.$

\item The exponential map \(\exp_H\) is a diffeomorphism, and for all \(x, y \in \mathfrak{h}\), it satisfies
\begin{align}\label{propertyexp}
\e_H^x \e_H^y= \e^{x+y+\frac{[x,y]}{2}}_H.
\end{align}

\end{enumerate}

\begin{remark}\label{simpleexp}
Using Theorem\ref{semiexp}, the exponential map $\Exp_H: \mathfrak{h} \ltimes_{\ad} \mathfrak{h} \rightarrow H \ltimes_{\Ad} H$ is given by $\Exp_H(x,y)=\big(\e^x_H, \e_H^{y+\frac{[x,y]}{2}} \big)$ for all $x,y \in \mathfrak{h}$.
\end{remark}

\section{Rota--Baxter operators over Heisenberg  Lie algebra}
In this section, we revisit the classification of Rota–Baxter operators on the Heisenberg Lie algebra as presented in \cite[Theorem 3.2]{JH18}. We represent these operators in matrix form and classify them up to conjugation by an automorphism of \( \mathfrak{h} \). Our matrix representation of these operators is slightly different from the original.

Let us fix an ordered basis \(\{X, Y, Z\}\) for \(\mathfrak{h}\). Then every Rota--Baxter operator $R$ over $\mathfrak{h}$ can be represented by a $3 \times 3$ matrix with entries in $\mathbb{C}$, given by
\[
R= \left[\begin{matrix}
	 r_{11} & r_{12} & r_{13} \\
	r_{21} & r_{22} & r_{23} \\
	r_{31} & r_{32} & r_{33} \\
\end{matrix} \right]
\]
where $r_{ij} \in \mathbb{C}$ for all $1 \leq i,j  \leq 3$, such that
\begin{align}\label{RBop}
R(X) =\; &  r_{11}X+r_{12}Y+r_{13}Z,\\
R(Y) =\; &  r_{21}X+r_{22}Y+r_{23}Z,\\
R(Z)= \; &  r_{31}X+r_{32}Y+r_{33}Z.
\end{align}

\begin{thm}\label{allrbo}
Let $r_{ij} \in \mathbb{C}$ for all $1 \leq i,j \leq 3$. All Rota-Baxter operators on  3-dimensional Heisenberg Lie algebra $\mathfrak{h}$ are the following:

\begin{enumerate}
\item $R_1=\left[ \begin{matrix}
	r_{11} & r_{12} & r_{13} \\
	r_{21} & \frac {r_{33} r_{11}+r_{33}+r_{21} r_{12}}{r_{11}-r_{33}} & r_{23} \\
	0 &0 & r_{33} \\
\end{matrix} \right]$ where $r_{11}-r_{33} \neq 0.$

\item$ R_2=\left[\begin{matrix}
	r_{33} & \frac {-(r_{33}+r^2_{33})}{r_{21}} & r_{13} \\
r_{21} & r_{22} & r_{23} \\
	0 &0 & r_{33} \\
\end{matrix} \right]$ where $r_{21} \neq 0.$

\item$ R_3=\left[\begin{matrix}
-1 & r_{12} & r_{13} \\
	0 & r_{22} & r_{23} \\
	0 &0 & -1 \\
\end{matrix} \right].$ 

\item $ R_4=\left[\begin{matrix}
	0 & r_{12}  & r_{13} \\
	0 & r_{22} & r_{23} \\
	0 &0 & 0 \\
\end{matrix} \right].$ 
\end{enumerate}
\end{thm}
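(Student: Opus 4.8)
The plan is to exploit that a Rota--Baxter operator $R$ is linear, hence determined by the matrix $(r_{ij})$, and that its defining identity is bilinear and antisymmetric in the two arguments. Setting $B(x,y) = [R(x),R(y)] - R\big([R(x),y]+[x,R(y)]+[x,y]\big)$, one checks directly that $B$ is bilinear and that interchanging $x$ and $y$ negates both $[R(x),R(y)]$ and the argument of $R$, so $B$ is antisymmetric and in particular $B(x,x)=0$ holds automatically. Thus $R$ is a Rota--Baxter operator if and only if $B$ vanishes on the three ordered basis pairs $(X,Y)$, $(X,Z)$, $(Y,Z)$. All the resulting computations are streamlined by the observation that, from \eqref{basis}, the bracket of two elements depends only on their $X,Y$-components: if $a=a_1X+a_2Y+a_3Z$ and $b=b_1X+b_2Y+b_3Z$, then $[a,b]=(a_1b_2-a_2b_1)Z$.

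First I would evaluate $B$ on $(X,Z)$ and $(Y,Z)$. Since $Z$ is central, the arguments of $R$ collapse to $[X,R(Z)]=r_{32}Z$ and $[Y,R(Z)]=-r_{31}Z$ respectively, while the outer brackets contribute the $Z$-coefficients $r_{11}r_{32}-r_{12}r_{31}$ and $r_{21}r_{32}-r_{22}r_{31}$. Comparing the coefficients of $Y$ and of $X$ then forces $r_{32}^2=0$ and $r_{31}^2=0$, so $r_{31}=r_{32}=0$; equivalently $R(Z)=r_{33}Z$, pinning down the third row as $(0,0,r_{33})$. With the third row known, evaluating $B$ on $(X,Y)$ has vanishing $X$- and $Y$-coefficients, and the remaining $Z$-coefficient reduces to the single scalar relation
\[
r_{11}r_{22} - r_{12}r_{21} = (r_{11}+r_{22}+1)\,r_{33}, \qquad (\ast)
\]
with the entries $r_{13}$ and $r_{23}$ left completely free.

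It then remains to parametrise the solution set of $(\ast)$. I would read $(\ast)$ as a linear equation in $r_{22}$ with leading coefficient $r_{11}-r_{33}$. In the case $r_{11}\neq r_{33}$ one solves for $r_{22}$ and obtains family $R_1$. In the complementary case $r_{11}=r_{33}$, equation $(\ast)$ simplifies to $r_{12}r_{21}=-(r_{33}+r_{33}^2)$; if $r_{21}\neq 0$ this determines $r_{12}$ and gives $R_2$, whereas if $r_{21}=0$ the same relation forces $r_{33}(1+r_{33})=0$, i.e. $r_{33}=-1$ or $r_{33}=0$, producing the two degenerate families $R_3$ and $R_4$.

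The genuine obstacle, modest in this setting, is purely organisational: verifying that the above case split is exhaustive and that each branch matches the closed forms stated in the theorem, in particular that the subcase $r_{11}=r_{33}$, $r_{21}=0$ admits no solutions beyond $r_{33}\in\{0,-1\}$. Everything else is bilinear bookkeeping on the three basis pairs, which is precisely why the direct solution of the defining equations \eqref{RBop} by \emph{Mathematica}, as cited in the text, reproduces exactly these four families.
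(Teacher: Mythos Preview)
Your proposal is correct and follows essentially the same route as the paper: the paper quotes this classification from \cite{JH18} (with the weight $-1$ to weight $1$ sign change) and describes the method as ``directly solving the operators defining equations'', which is exactly your reduction via bilinearity and skew-symmetry to the three basis pairs, the elimination $r_{31}=r_{32}=0$ from the pairs involving $Z$, and the single scalar constraint $(\ast)$ on the pair $(X,Y)$. Your case split on $r_{11}-r_{33}$ and then on $r_{21}$ is precisely the one encoded in the four families $R_1$--$R_4$ of the statement.
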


\begin{remark}
Please note that the definition of Rota–Baxter operators of weight $1$ given in \cite{JH18} is actually the definition of Rota–Baxter operators of weight \(-1\) according to our terminology. Therefore, the classification given there pertains to Rota–Baxter operators of weight $-1$. However, we convert these to Rota–Baxter operators of weight 1 using the fact that \(R\) is a Rota–Baxter operator of weight \(-1\) if and only if \(-R\) is a Rota–Baxter operator of weight $1$.
\end{remark}

Next, we present more simplified forms of these Rota–Baxter operators using the classification of automorphisms of \(\mathfrak{h}\), provided in \cite[Proposition 2.2]{WY2015}.

\begin{prop}
	Let $a_{ij} \in \mathbb{C}$ for all $1 \leq i,j \leq 3$. The automorphism group of $\mathfrak{h}$ is
	$$\left\{ \left[ \begin{matrix}
		a_{11} & a_{22}  & a_{13} \\
		a_{21} & a_{22} & a_{23} \\
		0 &0 & a_{11} a_{22}-a_{21} a_{22} \\
	\end{matrix}\right] \hspace{.1cm} \raisebox{-0.3cm}{\rule{0.4pt}{.8cm}} \hspace{.1cm}a_{11} a_{22}-a_{21} a_{22} \neq 0  \right\}.$$
\end{prop}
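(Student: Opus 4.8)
The plan is to verify directly that every matrix of the stated form is an automorphism of $\mathfrak{h}$, and conversely that every Lie algebra automorphism has this form. Throughout I work with the ordered basis $\{X,Y,Z\}$ and use that the bracket is completely determined by the relations in \eqref{basis}: $[X,Y]=Z$ and $[X,Z]=[Y,Z]=0$. The key structural feature is that $Z$ spans the center $\Z(\mathfrak{h})=[\mathfrak{h},\mathfrak{h}]$, so any automorphism must preserve this line. This forces the third row of the matrix (the $X$- and $Y$-components of $\phi(Z)$) to vanish, which is exactly why the stated form has zeros in positions $(3,1)$ and $(3,2)$.

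First I would show necessity. Let $\phi$ be an automorphism and write $\phi(X)=a_{11}X+a_{21}Y+a_{31}Z$, $\phi(Y)=a_{12}X+a_{22}Y+a_{32}Z$, and $\phi(Z)=a_{13}X+a_{23}Y+a_{33}Z$. Since $Z=[X,Y]$ generates the derived subalgebra and $\phi$ preserves brackets, $\phi(Z)=[\phi(X),\phi(Y)]$ must again lie in $[\mathfrak{h},\mathfrak{h}]=\mathbb{C}Z$; hence $a_{13}=a_{23}=0$. Computing $[\phi(X),\phi(Y)]$ explicitly using bilinearity and \eqref{basis} gives $(a_{11}a_{22}-a_{21}a_{12})Z$, so matching with $\phi(Z)=a_{33}Z$ yields $a_{33}=a_{11}a_{22}-a_{21}a_{12}$. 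Finally, bijectivity forces this determinant-type quantity to be nonzero (if it vanished, $\phi(Z)=0$, contradicting injectivity). This recovers the matrix shape in the proposition.

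Next I would show sufficiency: any matrix of that form defines an automorphism. Given the nonvanishing constraint, the matrix is invertible (its determinant is $a_{11}a_{22}-a_{21}a_{12}$ times $a_{33}=(a_{11}a_{22}-a_{21}a_{12})$, hence $(a_{11}a_{22}-a_{21}a_{12})^2\neq 0$), so the induced linear map is a bijection; it remains to check it respects the bracket. It suffices to verify $\phi([X,Y])=[\phi(X),\phi(Y)]$ and the two vanishing brackets $\phi([X,Z])=[\phi(X),\phi(Z)]$ and $\phi([Y,Z])=[\phi(Y),\phi(Z)]$ on basis elements, then extend bilinearly. The first holds by the same computation as above together with $a_{33}=a_{11}a_{22}-a_{21}a_{12}$; the latter two hold because $\phi(Z)$ is central (a scalar multiple of $Z$) so both sides are zero.

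I do not expect a genuine obstacle here, since the argument is a routine bracket computation dictated by the center-preservation constraint; the one point requiring care is the bookkeeping that the normalization $a_{33}=a_{11}a_{22}-a_{21}a_{12}$ is forced, rather than free. I note that the matrix as displayed in the proposition appears to contain typographical slips (the $(1,2)$ entry is printed as $a_{22}$ and the nonvanishing condition reads $a_{11}a_{22}-a_{21}a_{22}$), and the intended form is the one with distinct entries $a_{12}$ and determinant condition $a_{11}a_{22}-a_{21}a_{12}\neq 0$; the proof proceeds with these corrected entries.
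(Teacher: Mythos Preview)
Your argument is correct in substance, but there is nothing in the paper to compare it against: the proposition is quoted from \cite[Proposition 2.2]{WY2015} and stated without proof. Your direct verification---that any automorphism must preserve the line $\mathbb{C}Z=[\mathfrak{h},\mathfrak{h}]=\Z(\mathfrak{h})$, that the scalar on $Z$ is forced to be the $2\times 2$ determinant of the induced action on $\mathfrak{h}/\mathbb{C}Z$, and that conversely any such matrix respects the bracket and is invertible---is the standard route and is sound.

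One bookkeeping slip: your indexing in the necessity paragraph is transposed relative to the paper's convention. The paper encodes linear maps by \emph{rows} (cf.\ $R(X)=r_{11}X+r_{12}Y+r_{13}Z$ just before Theorem~\ref{allrbo}), so one should write $\phi(X)=a_{11}X+a_{12}Y+a_{13}Z$, $\phi(Y)=a_{21}X+a_{22}Y+a_{23}Z$, $\phi(Z)=a_{31}X+a_{32}Y+a_{33}Z$; center-preservation then yields $a_{31}=a_{32}=0$ and $a_{33}=a_{11}a_{22}-a_{12}a_{21}$, matching the zeros in positions $(3,1),(3,2)$ that you correctly identified in your opening plan. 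With your column-style labels you instead deduce $a_{13}=a_{23}=0$, which on its face contradicts the displayed matrix (where $a_{13},a_{23}$ are the free parameters). The mathematics is unaffected---only the subscripts need swapping---and your diagnosis of the typographical errors in the printed matrix is also correct.
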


\begin{thm}\label{eqrbo}
Let  $R= \left[
\begin{matrix}
	r_{11} & r_{12} & r_{13} \\
	r_{21} & r_{22} & r_{23} \\
	0 & 0 & r_{33} \\
\end{matrix}
\right]$
be an arbitrary Rota–Baxter operator on \( \mathfrak{h} \). Upto Jordan canonical forms of  $\left[\begin{matrix}
	r_{11} & r_{12} \\
	r_{21} & r_{22} \\
\end{matrix} \right],$ we have the following Rota--Baxter operators on $\mathfrak{h}$:
\begin{enumerate}
\item  $P_1=\left[ \begin{matrix}
	\frac {(1+r_{22})r_{33}}{r_{22}-r_{33}} & 0 & r_{13} \\
	0 & r_{22} & r_{23} \\
	0 &0 & r_{33} \\
\end{matrix} \right]$ where $r_{22} \neq r_{33}.$

\item $P_2=\left[ \begin{matrix}
	r_{11}& 0 & r_{13} \\
	0 & -1 & r_{23} \\
	0 &0 & -1 \\
\end{matrix} \right].$ 

\item  $P_3=\left[ \begin{matrix}
	r_{11}& 0 & r_{13} \\
	0 & 0 & r_{23} \\
	0 &0 & 0 \\
\end{matrix} \right].$ 

\item  $P_4=\left[ \begin{matrix}
	r_{11} & 1 & r_{13} \\
	0 & r_{11} & r_{23} \\
	0 &0 &\frac{r^2_{11}}{1+2r_{11}} \\
\end{matrix} \right]$ where $r_{11} \neq -\frac{1}{2}$.
\end{enumerate}

\end{thm}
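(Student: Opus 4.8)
The plan is to use the automorphism group of $\mathfrak{h}$ to conjugate the distinguished $2\times 2$ block of $R$ into Jordan canonical form and then read off the surviving constraints from Theorem \ref{allrbo}. Write an arbitrary operator from Theorem \ref{allrbo} as a block matrix with upper-left $2\times 2$ block $B=\left[\begin{smallmatrix} r_{11} & r_{12}\\ r_{21} & r_{22}\end{smallmatrix}\right]$, lower-right entry $r_{33}$, upper-right column $w=(r_{13},r_{23})^{t}$, and vanishing lower-left block; every operator listed there has zero $(3,1)$ and $(3,2)$ entries, so this is no loss of generality. By the preceding proposition an automorphism $\Psi$ of $\mathfrak{h}$ has the analogous block shape, with upper-left block an arbitrary $A\in GL_2(\mathbb{C})$, upper-right column $v=(a_{13},a_{23})^{t}$, and lower-right entry exactly $\det A$.

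First I would record the conjugation formula. A direct computation shows that the upper-left $2\times 2$ block of $\Psi R \Psi^{-1}$ is $A B A^{-1}$, the lower-right entry stays $r_{33}$, and the upper-right column becomes $\tfrac{1}{\det A}\bigl(Aw - ABA^{-1}v + r_{33}v\bigr)$, which I would simply rename as a fresh free column $(r_{13},r_{23})^{t}$. The key structural point is that the $(3,3)$ entry of an automorphism is \emph{forced} to equal $\det A$, and this is precisely what leaves $r_{33}$ invariant and decouples the $2\times 2$ block. Since equivalence of Rota--Baxter operators means conjugation by an automorphism and since $A$ ranges over all of $GL_2(\mathbb{C})$, I may replace $B$ by any similar matrix; in particular I may assume $B$ is in Jordan canonical form, and I may freely permute its diagonal entries using the swap $X\leftrightarrow Y$.

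Over $\mathbb{C}$ there are exactly two Jordan types, which I would treat in turn. If $B$ is diagonalizable then $r_{12}=r_{21}=0$ and $B=\mathrm{diag}(\lambda_1,\lambda_2)$; because $r_{21}=0$ the operator cannot be of type $R_2$, and types $R_3,R_4$ require $r_{11}=r_{33}$, so if the entry $\lambda_1$ placed in position $(1,1)$ satisfies $\lambda_1\neq r_{33}$ the operator must be of type $R_1$. The relation defining $R_1$, namely $r_{22}=\tfrac{r_{33}(r_{11}+1)}{r_{11}-r_{33}}$, then determines the $(1,1)$ entry in terms of $r_{22}=\lambda_2$ and yields $P_1$, valid when $\lambda_2\neq r_{33}$. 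If instead the eigenvalue in position $(2,2)$ equals $r_{33}$, substituting $r_{22}=r_{33}$ into this relation forces $r_{33}(r_{33}+1)=0$, hence $r_{33}\in\{0,-1\}$; these two values give exactly $P_3$ (from $R_4$, with $r_{33}=0$) and $P_2$ (from $R_3$, with $r_{33}=-1$), the remaining eigenvalue being free. If $B$ is a single Jordan block then, after conjugation, $B=\left[\begin{smallmatrix} r_{11} & 1\\ 0 & r_{11}\end{smallmatrix}\right]$ with $r_{21}=0$ and a repeated eigenvalue; feeding $r_{11}=r_{22}$ and $r_{12}r_{21}=0$ into the $R_1$ relation gives $r_{33}=\tfrac{r_{11}^{2}}{1+2r_{11}}$, which is $P_4$, the denominator being nonzero exactly when $r_{11}\neq -\tfrac12$.

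The main obstacle is not any single hard step but the careful bookkeeping of the degenerate sub-cases. One must confirm that the coincidence $r_{22}=r_{33}$ collapses only to $r_{33}\in\{0,-1\}$, so that $P_2$ and $P_3$ suffice and no further family is needed, and that the family $R_2$ contributes nothing new because its defining inequality $r_{21}\neq 0$ is destroyed upon passing to Jordan form. One should also check that no Jordan-block operator exists when $r_{11}=-\tfrac12$, which is what legitimizes the exclusion in $P_4$. Once these verifications are in place, $P_1$ through $P_4$ exhaust all Rota--Baxter operators on $\mathfrak{h}$ up to the Jordan form of the $2\times 2$ block.
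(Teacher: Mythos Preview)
Your proposal is correct and shares the paper's core idea: reduce the $2\times 2$ block $M_R$ to Jordan form via $\Aut(\mathfrak{h})$, then read off the constraint on $r_{33}$. The difference is in how that constraint is extracted. The paper does not invoke Theorem~\ref{allrbo} at all; instead, for each Jordan shape it directly re-verifies the identity $[S(X),S(Y)]=S\bigl([S(X),Y]+[X,S(Y)]+[X,Y]\bigr)$, obtaining the single scalar equation $(r_{11}+r_{22}+1)r_{33}=r_{11}r_{22}$ in the diagonal case and $(2r_{11}+1)r_{33}=r_{11}^{2}$ in the Jordan-block case, and then solves. Your route---matching the Jordan-form operator against the types $R_1,\dots,R_4$---saves the recomputation but trades it for the case-matching bookkeeping you flag at the end. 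The paper's direct equation handles the degenerate sub-cases (both diagonal eigenvalues equal to $r_{33}$; the block case with $r_{11}\in\{0,-1\}$) uniformly, whereas in your argument these sit exactly at the boundary of the $R_1$ domain $r_{11}\neq r_{33}$ and require the separate checks you acknowledge.
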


\begin{proof}
	Define $
	M_R= \left[\begin{matrix}
		r_{11} & r_{12} \\
		r_{21} & r_{22} \\
	\end{matrix} \right].
	$
	With the help of \( \Aut(\mathfrak{h}) \), we can transform the matrix \( M_R \) into its Jordan canonical form \( J \). We know that the possible Jordan canonical forms of $M_R$ are :
	
	\begin{enumerate}
\item $\left[\begin{matrix}
	r_{11} & 0 \\
	0 & r_{22} \\
\end{matrix} \right],$ 

\vspace{.2cm}
\item $\left[\begin{matrix}
	r_{11} & 1 \\
	0 & r_{11} \\
\end{matrix} \right]$.
	\end{enumerate}
	
	Let $S_1=\left[ \begin{matrix}
		r_{11} & 0 & r_{13} \\
	0 & r_{22} & r_{23} \\
		0 &0 & r_{33} \\
	\end{matrix} \right].$ By skew-symmetry and using that $Z$ is central, in order to show that $S_1$ is a Rota--Baxter operator, we only need to check 
\begin{align}
[S_1(X), S_1(Y)]=\; & S_1\big([S_1(X), Y]+[X, S_1(Y)]+[X,Y]  \big) \label{RBO1}
\end{align}
Using \eqref{basis} and \eqref{RBop} in \eqref{RBO1}, we get
\begin{align}\label{firstLHS}
[S_1(X), S_1(Y)]= \; &[r_{11}X+r_{13}Z, r_{22}Y+r_{23}Z] \nonumber\\
=& \; r_{11} r_{22}Z
\end{align} 
while,
\begin{align}\label{firstRHS}
S_1\big([S_1(X), Y]+[X, S_1(Y)]+[X,Y]  \big)=\; &R\big( [r_{11}X+r_{13}Z, Y]+[X, r_{22}Y+r_{23}Z]+ Z\big) \nonumber\\
=\; & R\big( r_{11}Z+ r_{22}Z+Z)\nonumber\\
=\; & (r_{11}+r_{22}+1)r_{33}Z.
\end{align} 
Comparing the coefficients in \eqref{firstLHS} and \eqref{firstRHS}, we have
\begin{align}\label{firsteq}
(r_{11}+r_{22}+1)r_{33}=\; & r_{11} r_{22}.
\end{align}
.
 Using Mathematica \cite{math}, we solved \eqref{firsteq}, and the only possible solutions are as follows:
\begin{enumerate}
\item $r_{11}=\frac {(1+r_{22})r_{33}}{r_{22}-r_{33}},$ $r_{22} \neq r_{33},$
\item $r_{22}=-1,$   and  $r_{33}=-1,$ 
\item  $r_{22}=0$ and  $r_{33}=0.$ 
\end{enumerate}

Let $S_2=\left[ \begin{matrix}
	r_{11} & 1 & r_{13} \\
	0 & r_{11} & r_{23} \\
	0 &0 & r_{33} \\
\end{matrix} \right].$ Using the similar method, we have 
\begin{align}\label{SecondLHS}
	[S_2(X), S_2(Y)]= \; &[r_{11}X+Y+r_{13}Z, r_{11}Y+r_{23}Z] \nonumber\\
	=& \; r_{11}^2Z,
\end{align} 
and 
\begin{align}\label{SecondRHS}
S_2\big([S_2(X), Y]+[X, S_2(Y)]+[X,Y]  \big)=\; &S_2(r_{11}Z+r_{11}Z+Z) \nonumber\\
=&\; (2r_{11}+1)r_{33}.
\end{align}
Comparing \eqref{SecondLHS} and \eqref{SecondRHS}, we have 
$$r^2_{11}=(2r_{11}+1)r_{33}.$$
The only possible solution is $r_{33}=\frac{r^2_{11}}{1+2r_{11}}$ such that $r_{11} \neq -\frac{1}{2}.$ This completes the proof.
	
\end{proof}

\section{Rota--Baxter operators over Heisenberg  group}

In this section, we induce the Rota--Baxter operators identified for the Heisenberg Lie algebra onto the Heisenberg Lie group.

By using Theorem \ref{allrbo}, we can conclude that any Rota--Baxter operator over $\mathfrak{h}$ has the form $$R= \left[ \begin{matrix}
	r_{11} & r_{12} & r_{13} \\
	r_{21} & r_{22} & r_{23} \\
0 & 0 & r_{33} \\
\end{matrix} \right].
$$
Let $\text{Gr}(R)=(R(x), x)$ be the graph of $R$. From Proposition \ref{liegraph}, we know that $\text{Gr}(R)$ is a Lie subalgebra of $\mathfrak{g} \ltimes_{\ad} \mathfrak{g}$. According to the theory of simply connected Lie groups, there exists a unique subgroup of $G \ltimes_{\Ad} G$ such that its tangent space at the identity is $\text{Gr}(R)$. More precisely, this subgroup is generated by $\Exp_H(\text{Gr}(R))$, where $\Exp_H$ is the exponential map of the semi-direct product as defined in Proposition \ref{semiexp}.

\begin{prop}
The image of \(\text{Gr}(R)\) under the map \(\Exp_H\) forms a group.
\end{prop}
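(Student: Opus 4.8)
The plan is to reduce the statement to the classical fact that, in a simply connected nilpotent Lie group, the exponential image of a Lie subalgebra is already a subgroup. First I would record that $\mathfrak{h}$ is $2$-step nilpotent, so by Proposition \ref{semiiso} the Lie algebra $\mathfrak{h} \ltimes_{\ad} \mathfrak{h} \cong \mathfrak{h} \times \mathfrak{h}$ is again $2$-step nilpotent; correspondingly $H \ltimes_{\Ad} H \cong H \times H$ is a simply connected nilpotent Lie group. In such a group the Baker--Campbell--Hausdorff series terminates, and in the present $2$-step case it collapses to a single quadratic correction, exactly as in property \eqref{propertyexp}.

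The key algebraic input is Proposition \ref{liegraph}: the graph $\text{Gr}(R) = \{(R(x), x) : x \in \mathfrak{h}\}$ is a Lie subalgebra of $\mathfrak{h} \ltimes_{\ad} \mathfrak{h}$, hence closed under addition, scalar multiplication, and the bracket. I would then verify the three subgroup conditions for $\Exp_H(\text{Gr}(R))$. For $u = (R(a), a)$ and $v = (R(b), b)$ in $\text{Gr}(R)$, the truncated BCH identity gives $\Exp_H(u)\,\Exp_H(v) = \Exp_H\left(u + v + \tfrac{1}{2}[u, v]\right)$; since $u + v = (R(a+b), a+b) \in \text{Gr}(R)$ by linearity of $R$ and $[u, v] \in \text{Gr}(R)$ by the subalgebra property, the argument lies in $\text{Gr}(R)$, so the product is again in $\Exp_H(\text{Gr}(R))$. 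The identity is $\Exp_H(0,0) = (\I, \I)$ with $(0,0) \in \text{Gr}(R)$, and $\Exp_H(u)^{-1} = \Exp_H(-u)$ with $-u \in \text{Gr}(R)$, yielding closure under inverses.

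To keep the argument entirely transparent I would actually carry it out by transporting the problem to $H \times H$ through the isomorphisms of Proposition \ref{semiiso}. Since $\Exp_H = \Phi^{-1} \circ \exp \circ \phi$, as established just before Theorem \ref{semiexp}, and $\phi$ is a Lie algebra isomorphism, $\phi(\text{Gr}(R))$ is a subalgebra of $\mathfrak{h} \times \mathfrak{h}$; it then suffices to show $\exp(\phi(\text{Gr}(R)))$ is a subgroup of $H \times H$, where $\exp$ acts coordinatewise and property \eqref{propertyexp} applies in each factor, making the multiplication check the elementary componentwise one. Because $\Phi^{-1}$ is a group isomorphism, $\Exp_H(\text{Gr}(R)) = \Phi^{-1}\bigl(\exp(\phi(\text{Gr}(R)))\bigr)$ is then a subgroup of $H \ltimes_{\Ad} H$.

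The main obstacle is justifying the exponential-of-sum identity $\Exp_H(u)\,\Exp_H(v) = \Exp_H\left(u + v + \tfrac12[u,v]\right)$ inside the semidirect product: one must either invoke nilpotency to truncate the BCH series in $H \ltimes_{\Ad} H$, or descend to $H \times H$ where \eqref{propertyexp} applies factorwise and then pull the result back through $\Phi$. Everything else---linearity of $R$, the subalgebra property of $\text{Gr}(R)$, and the identity and inverse elements---is immediate, so the care lies entirely in setting up this multiplication formula correctly.
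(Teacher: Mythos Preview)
Your proof is correct and takes a genuinely different route from the paper. The paper argues by direct computation: it expands $\Exp_H(R(x),x)\,\Exp_H(R(y),y)$ using the explicit formula from Theorem~\ref{semiexp} together with \eqref{propertyexp}, and then invokes the Rota--Baxter identity $[R(x),R(y)]=R([R(x),y]+[x,R(y)]+[x,y])$ to rewrite the product as $\Exp_H(R(z),z)$ for the explicit element $z=x+y+\tfrac12([R(x),y]+[x,R(y)]+[x,y])$. You instead use the conceptual fact that in a simply connected $2$-step nilpotent group the exponential image of any Lie subalgebra is already a subgroup, relying on Proposition~\ref{liegraph} for the subalgebra property of $\text{Gr}(R)$ and never unpacking the Rota--Baxter relation directly. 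Your argument is cleaner and immediately generalizes to any simply connected nilpotent Lie group; the paper's hands-on computation has the modest advantage of exhibiting $z$ explicitly and of staying entirely within tools already set up in the paper, with no appeal to BCH. Your proposed detour through $H\times H$ via $\Phi,\phi$ is indeed the smoothest way to justify the product formula $\Exp_H(u)\Exp_H(v)=\Exp_H\bigl(u+v+\tfrac12[u,v]\bigr)$, since \eqref{propertyexp} is already available in each factor.
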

\begin{proof}
Let $x,y \in \mathfrak{g},$ then by Theorem \ref{semiexp}, we know that $\Exp_H(R(x),x)=(e^{R(x)}_H, e^{R(x)+x}_H e_H^{-R(x)})$ and $\Exp_H(R(y),y)=(e^{R(y)}_H, e^{R(y)+y}_H e_H^{-R(y)}).$ We have
\begin{align}
(&\e_H^{R(x)}, \e^{R(x)+x}_H \e_H^{-R(x)}) (\e^{R(y)}_H, \e^{R(y)+y}_H \e_H^{-R(y)})\nonumber\\
& =\;  (\e_H^{R(x)}\e^{R(y)}_H, \e^{R(x)+x}_H \e_H^{-R(x)} \e_H^{R(x)} \e^{R(y)+y}_H \e_H^{-R(y)} \e_H^{-R(x)} ),\nonumber\\
& =\;  (\e^{R(x)+R(y)+ \frac{[R(x),R(y)]}{2}}_H,  \e^{R(x)+x}_H  \e^{R(y)+y}_H \e_H^{-R(y)} \e_H^{-R(x)}), \text{ using \eqref{propertyexp} } \nonumber\\
& =\;  (\e^{R(x)+R(y)+ \frac{[R(x),R(y)]}{2}}_H,  \e^{R(x)+x+R(y)+y +\frac{[x+R(x),y+R(y)]}{2}}_H \e_H^{-R(y)-R(x)-  \frac{[R(x),R(y)]}{2} }),\nonumber \\
& =\; \Big(\e^{R(x+y+ \frac{[R(x),y]+[x, R(y)] +[x,y]}{2})}_H, \e_H^{(x+y+ \frac{[R(x),y]+[x, R(y)] +[x,y]}{2})+ (R(x)+R(y)+\frac{[R(x),R(y)]}{2})} \e_H^{-R(y)-R(x)-  \frac{[R(x),R(y)]}{2} } \Big).\label{graphgroup}
\end{align}
Using $z=x+y+ \frac{[R(x),y]+[x, R(y)] +[x,y]}{2}$  in \eqref{graphgroup}, we have 
$$(\e_H^{R(x)}, \e^{R(x)+x}_H \e_H^{-R(x)}) (\e^{R(y)}_H, \e^{R(y)+y}_H \e_H^{-R(y)})=(\e_H^{R(z)}, \e_H^{z+R(z)} \e^{-R(z)}_H).$$
This demonstrates that the set $\Exp_H(\text{Gr}(R))$ is closed under multiplication. Similarly, it can be observed that $\Exp_H(R(x),x)^{-1} = \Exp_H(-R(x),-x)$, indicating that $\Exp_H(\text{Gr}(R))$ is also closed under taking inverses. Consequently, this establishes that $\Exp_H(\text{Gr}(R))$is  a group.
\end{proof}

\begin{thm}\label{RBmatrix}
Let $R=\left[\begin{matrix}
	r_{11} & r_{12} & r_{13} \\
	r_{21} & r_{22} & r_{23} \\
	0 & 0 & r_{33} \
\end{matrix} \right]$ be any Rota--Baxter operator over $\mathfrak{h}$. Then the map $x \mapsto x+\frac{ [R(x),x]}{2}$ is a bijection of $\mathfrak{h}$.
\end{thm}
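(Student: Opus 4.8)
The plan is to write the map explicitly in coordinates and recognize it as a shear that fixes the $X$- and $Y$-directions. First I would parametrize a general element as $x = aX + bY + cZ$ and compute $R(x)$. Since every Rota--Baxter operator of Theorem~\ref{allrbo} has bottom row $(0,0,r_{33})$, the vector $R(Z)$ is a scalar multiple of $Z$; consequently the $X$- and $Y$-components of $R(x)$ are $a r_{11} + b r_{21}$ and $a r_{12} + b r_{22}$, which depend only on $a$ and $b$ and not on $c$.

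Next I would compute the correction term $\tfrac12[R(x),x]$. Writing $R(x) = pX + qY + sZ$ with $p = a r_{11}+b r_{21}$ and $q = a r_{12}+b r_{22}$, and using that $Z$ is central together with $[X,Y]=Z$ from \eqref{basis}, every bracket except $[X,Y]$ and $[Y,X]$ vanishes, so
\[
[R(x),x] = (pb - qa)\,Z = \big(ab(r_{11}-r_{22}) + b^2 r_{21} - a^2 r_{12}\big)\,Z .
\]
In particular $[R(x),x]$ always lies in the center $\mathbb{C}Z$, and its coefficient is a function $f(a,b)$ of $a$ and $b$ alone.

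Combining these two steps, the map takes the explicit form
\[
x = aX + bY + cZ \;\longmapsto\; aX + bY + \big(c + \tfrac12 f(a,b)\big)Z ,
\]
i.e.\ it leaves the $X$- and $Y$-coordinates unchanged and shifts the $Z$-coordinate by a quantity depending only on $a$ and $b$. I would conclude bijectivity immediately: such a shear is inverted by $aX+bY+cZ \mapsto aX+bY+(c-\tfrac12 f(a,b))Z$.

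There is no real obstacle here; the entire content is the observation that the correction term $\tfrac12[R(x),x]$ is central and that its coefficient is independent of $c$. The one point that genuinely has to be used is that $R$ maps $Z$ into $\mathbb{C}Z$ (equivalently, that the $(3,1)$ and $(3,2)$ entries of $R$ vanish), which guarantees that the $Z$-shift does not feed back into the $X$- and $Y$-coordinates; this is exactly what Theorem~\ref{allrbo} provides, and notably the argument never needs to invoke the Rota--Baxter identity itself.
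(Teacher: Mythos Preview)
Your approach is correct and essentially identical to the paper's: both compute $R(x)$ and $[R(x),x]$ in coordinates, observe that the latter is a multiple of $Z$ whose coefficient depends only on $a$ and $b$, and conclude that $P$ is a shear in the $Z$-direction. The only cosmetic difference is that the paper verifies injectivity and surjectivity separately, whereas you write down the inverse shear directly.
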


\begin{proof}
Let $P: \mathfrak{h} \rightarrow \mathfrak{h}$ be defined by $P(x):=x+\frac{ [R(x),x]}{2}$ for all $ x \in \mathfrak{h}$. Let $x=aX+bY+cZ$ for some $a,b,c \in \mathbb{C}$. Direct calculations shows that 
\begin{align*}
R(x)=(ar_{11}+br_{21})X+(ar_{12}+br_{22})Y+(ar_{13}+br_{23}+c r_{33})Z
\end{align*}
and 
\begin{align*}
[R(x),x]=\;& [(ar_{11}+br_{21})X+(ar_{12}+br_{22})Y+(ar_{13}+br_{23}+c r_{33})Z, aX+bY+cZ ],\nonumber \\
=\;&\big( b(ar_{11}+br_{21})-a(ar_{12}+br_{22})\big)Z. 
\end{align*}

Hence, we have  
\begin{align}\label{RBeqn}
P(x)=aX+bY +\frac{1}{2}\big(2c+b(ar_{11}+br_{21})-a(ar_{12}+br_{22}) \big)Z.
\end{align}

Similarly, let  $y=a^{\prime}X+b^{\prime}Y+c^{\prime}Z$ for some $a^{\prime}, b^{\prime}, c^{\prime} \in \mathbb{C}$. We have
$$P(y)=a^{\prime}X+b^{\prime}Y +\frac{1}{2}\big(2c^{\prime}+b^{\prime}(a^{\prime} r_{11}+b^{\prime}r_{21})-a^{\prime}(a^{\prime}r_{12}+b^{\prime}r_{22}) \big)Z.$$
It can be easily seen that $P(x)=P(y)$ implies $x=y$. This shows that $P$ is injective.

To demonstrate that $P$ is bijective, consider any $x = aX + bY + cZ \in \mathfrak{g}$ and let $$y = aX + bY + \big(c -\frac{1}{2} (b(ar_{11} + br_{21}) + a(ar_{12} + br_{22}))\big)Z.$$ Then, using \eqref{RBeqn}, we have $P(y) = x$, indicating that $P$ is surjective. Thus, $P$ is a bijection.
\end{proof}

\begin{cor}\label{bijection}
For any Rota--Baxter operator $R$ on $\mathfrak{h}$, the projection of the second coordinate from $\Exp_H(\text{Gr}(R))$ onto $H$ is a bijection.
\end{cor}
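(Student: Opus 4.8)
The plan is to recognize the second-coordinate projection as the composite of two maps that are already known to be bijections, so that the corollary becomes a formal consequence of facts established earlier. By Remark \ref{simpleexp}, the exponential map of the semi-direct product satisfies $\Exp_H(x,y)=(\e_H^{x},\e_H^{y+\frac{[x,y]}{2}})$, hence applying it to the graph of $R$ gives
\[
\Exp_H(\text{Gr}(R))=\left\{\left(\e_H^{R(x)},\,\e_H^{\,x+\frac{[R(x),x]}{2}}\right)\ :\ x\in\mathfrak{h}\right\}.
\]
Writing $P(x)=x+\frac{[R(x),x]}{2}$ exactly as in Theorem \ref{RBmatrix}, the second coordinate of the element indexed by $x$ is precisely $\e_H^{P(x)}$. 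Thus the projection onto the second coordinate has image the set $\{\e_H^{P(x)}:x\in\mathfrak{h}\}$, and as a map it is the composite $\e_H\circ P:\mathfrak{h}\to H$.

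The key step is then immediate. Theorem \ref{RBmatrix} already proves that $P$ is a bijection of $\mathfrak{h}$, and the explicit formula \eqref{expmapexpression} together with property \eqref{propertyexp} guarantees that $\e_H:\mathfrak{h}\to H$ is a diffeomorphism, in particular a bijection. To make the statement about the projection rigorous I would spell out both halves directly. For injectivity, take two elements of $\Exp_H(\text{Gr}(R))$, represented as $\Exp_H(R(x_1),x_1)$ and $\Exp_H(R(x_2),x_2)$, and suppose they share the same second coordinate; then $\e_H^{P(x_1)}=\e_H^{P(x_2)}$, so injectivity of $\e_H$ forces $P(x_1)=P(x_2)$, and injectivity of $P$ forces $x_1=x_2$, whence the two group elements coincide. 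For surjectivity, given $h\in H$ I choose $w\in\mathfrak{h}$ with $\e_H^{w}=h$ (using that $\e_H$ is onto) and then $x\in\mathfrak{h}$ with $P(x)=w$ (using that $P$ is onto); the element $\Exp_H(R(x),x)$ then has second coordinate $\e_H^{P(x)}=\e_H^{w}=h$.

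I do not expect any genuine obstacle here, since all the substantive content is already contained in Theorem \ref{RBmatrix}, whose proof exhibits the explicit inverse of $P$ through the parametrization of $\mathfrak{h}$ by $a,b,c$. The only point requiring care is the bookkeeping in identifying the second-coordinate projection with $\e_H\circ P$ via the semi-direct-product exponential of Theorem \ref{semiexp} and Remark \ref{simpleexp}; once that identification is made, the corollary follows purely formally from composing the two bijections $P$ and $\e_H$.
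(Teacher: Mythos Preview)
Your proposal is correct and follows essentially the same approach as the paper: identify the second-coordinate projection with $\e_H\circ P$ via Remark \ref{simpleexp}, and then invoke Theorem \ref{RBmatrix} and the bijectivity of $\e_H$ to conclude. The only difference is that you spell out the injectivity and surjectivity arguments in full, whereas the paper compresses this into a single sentence.
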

\begin{proof}
Let $\pi_2$ denote the projection of the second coordinate from $\Exp_H(\text{Gr}(R))$ onto $H$. Then $\pi_2(\e^{R(x)}_H, \e^{R(x)+x}_H \e_H^{-R(x)})=\e^{R(x)+x}_H \e_H^{-R(x)}.$ Now, using Remark \ref{simpleexp}, we have $$\pi_2(\e^{R(x)}_H, \e^{R(x)+x}_H \e_H^{-R(x)})=\e_H^{x+\frac{[R(x),x]}{2}}.$$ Using the fact that $\e_H$ and $P$ are bijections, we can conclude that $\pi_2$ is a bijection.
\end{proof}

\begin{cor}\label{algebratogroup}
Every Rota--Baxter operator on $\mathfrak{h}$ induces a Rota--Baxter operator on $H$.
\end{cor}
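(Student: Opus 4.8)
The plan is to read the corollary off directly from Proposition \ref{grptorb}, combined with the two results that immediately precede it. Proposition \ref{grptorb} identifies Rota--Baxter operators on a group with those subgroups of $G \ltimes_{\Ad} G$ whose second-coordinate projection onto $G$ is a bijection. Taking $G = H$, it therefore suffices to produce, for each Rota--Baxter operator $R$ on $\mathfrak{h}$, a subgroup of $H \ltimes_{\Ad} H$ of exactly this type; the correspondence of Proposition \ref{grptorb} then hands back a Rota--Baxter operator on $H$.

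First I would form the graph $\text{Gr}(R)$, which by Proposition \ref{liegraph} is a Lie subalgebra of $\mathfrak{h} \ltimes_{\ad} \mathfrak{h}$, and pass to its image $\Exp_H(\text{Gr}(R))$ under the exponential map of Theorem \ref{semiexp}. Since $\Exp_H$ maps $\mathfrak{h} \ltimes_{\ad} \mathfrak{h}$ into $H \ltimes_{\Ad} H$, this image is a subset of $H \ltimes_{\Ad} H$; the Proposition preceding Theorem \ref{RBmatrix} shows it is closed under the group operation and under inversion, so it is in fact a subgroup. Then Corollary \ref{bijection} supplies precisely the remaining hypothesis of Proposition \ref{grptorb}, namely that the projection of the second coordinate of $\Exp_H(\text{Gr}(R))$ onto $H$ is a bijection.

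With both hypotheses verified, Proposition \ref{grptorb} yields a unique Rota--Baxter operator $\mathcal{R}$ on $H$ corresponding to this subgroup, which proves the corollary. Unwinding the correspondence and using Remark \ref{simpleexp}, I would record the induced operator explicitly: writing $P(x) = x + \frac{[R(x),x]}{2}$, a general element of $\Exp_H(\text{Gr}(R))$ has the form $(\e_H^{R(x)}, \e_H^{P(x)})$, so for $g \in H$ one locates the unique $x \in \mathfrak{h}$ with $g = \e_H^{P(x)}$ and sets $\mathcal{R}(g) = \e_H^{R(x)}$; equivalently $\mathcal{R} = \e_H \circ R \circ P^{-1} \circ \e_H^{-1}$, where the invertibility of $P$ is Theorem \ref{RBmatrix} and that of $\e_H$ is the diffeomorphism property in \eqref{propertyexp}.

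Since all the substantive work has already been carried out — that $\Exp_H(\text{Gr}(R))$ is a group, and that its second-coordinate projection onto $H$ is a bijection — the corollary is an immediate assembly of these pieces. The only point demanding any care is ensuring that the object fed into Proposition \ref{grptorb} is genuinely a subgroup of $H \ltimes_{\Ad} H$ rather than merely an abstract group, which is why I would make the inclusion $\Exp_H(\text{Gr}(R)) \subseteq H \ltimes_{\Ad} H$ explicit before invoking the group axioms; beyond that, no real obstacle remains.
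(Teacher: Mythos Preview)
Your proposal is correct and follows essentially the same route as the paper: both assemble the Proposition that $\Exp_H(\text{Gr}(R))$ is a subgroup, Corollary \ref{bijection} that its second-coordinate projection onto $H$ is bijective, and the bijectivity of $\e_H$, then read off the operator $\mathcal{R}(\e_H^{P(x)})=\e_H^{R(x)}$ via the correspondence of Proposition \ref{grptorb}. The only cosmetic slip is your pointer to \eqref{propertyexp} for the diffeomorphism property of $\e_H$; that equation is the BCH-type product formula, while the diffeomorphism statement is the sentence preceding it.
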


\begin{proof}
Let $R$ be any Rota--Baxter operator on $\mathfrak{h}$. Utilizing the fact that $\e_H$ is a bijection on $H$, along with Corollary \ref{bijection} and Proposition \ref{graphgroup}, we can establish that for $x \in  \mathfrak{h}$, the map $\mathcal{R}: H \rightarrow H$, defined as
$$\mathcal{R}(e_H^{P(x)})=e_H^{R(x)}$$ is a Rota--Baxter operator on $H$,  where  $P(x)=x+\frac{ [R(x),x]}{2}$.
\end{proof}
The following question arises naturally:
\begin{question}
Do equivalent Rota–Baxter operators on \( \mathfrak{h} \) induce equivalent Rota–Baxter operators on \( H \)?
\end{question}

 Next we see more precise form of Rota--Baxter operator over $H$. Consider 
 $$x=\left[\begin{matrix}
	0 & a & c \\
	0 & 0 & b \\
	0 & 0 & 0 \\
\end{matrix}\right] \mbox{ and }  R=\left[\begin{matrix}
r_{11} & r_{12} & r_{13} \\
r_{21} & r_{22} & r_{23} \\
0 & 0 & r_{33} \
\end{matrix} \right].$$ 

Through direct calculations and using Theorem \ref{RBmatrix} \eqref{expmapexpression}, we have
\begin{enumerate}

\item $R(x)= \left[\begin{matrix}
0 & (ar_{11}+br_{21}) & (ar_{13}+br_{23}+c r_{33}) \\
0 & 0 & (ar_{12}+br_{22}) \\
0 & 0 &0 \
\end{matrix} \right]$ 
\item $P(x)=x+\frac{ [R(x),x]}{2}=\left[\begin{matrix}
	0 & a & \frac{1}{2}\big(2c+b(ar_{11}+br_{21})-a(ar_{12}+br_{22}) \big) \\
	0 & 0 & b \\
	0 & 0 & 0 \\
\end{matrix}\right]$

\item $
\e_H^{P(x)}=\left[\begin{matrix}
	1 & a & \frac{1}{2} (ab + 2c + abr_{11} - a^2r_{12} + b^2r_{21} - abr_{22}) \\
	0 & 1 & b \\
	0 & 0 & 1 \\
\end{matrix}\right]
$

\item $\e_H^{R(x)}=\left[\begin{matrix}
	1 & a r_{11} + b r_{21} & \frac{1}{2} (a^2 r_{11} r_{12} + 2 a r_{13} + a b r_{12} r_{21} + a b r_{11} r_{22} + b^2 r_{21} r_{22} + 2 b r_{23} + 2 c r_{33}) \\
	0 & 1 & a r_{12} + b r_{22} \\
	0 & 0 & 1
\end{matrix}\right]$.
\end{enumerate}
It is clear from Corollary \ref{bijection} that the mapping $\e_H^{P(x)} \mapsto \e_H^{R(x)}$ defines a Rota--Baxter operator on $H$, yet the description of the matrix for $\e_H^{P(x)} \in H$ is somewhat complicated. We will simplify this as follows.

Let $y=\left[\begin{matrix}
	0 & a & c-\frac{1}{2}\big(ab-abr_{22}+abr_{11}+b^2r_{21}-a^2r_{12}\big) \\
	0 & 0 & b \\
	0 & 0 & 0 \\
\end{matrix}\right]$. Then we have

\begin{enumerate}
\item $R(y)=\left[\begin{matrix}
	0 & (ar_{11}+br_{21}) & ar_{13}+br_{23}+\Big(c-\frac{1}{2}\big( ab-abr_{22}+abr_{11}+b^2r_{21}-a^2r_{12}\big)\Big) r_{33} \\
	0 & 0 & (ar_{12}+br_{22}) \\
	0 & 0 &0 \
\end{matrix}\right].$
\vspace{.2cm}
\item $P(y)=y+\frac{ [R(y),y]}{2}=
\left[\begin{matrix}
	0 & a & \frac{1}{2} \left( b (a r_{11} + b r_{21}) - a (a r_{12} + b r_{22}) \right. \\
	&& \quad \left. + 2 \left( c + \frac{1}{2} \left( -a b - a b r_{11} + a^2 r_{12} - b^2 r_{21} + a b r_{22} \right) \right) \right) \\
	0 & 0 & b \\
	0 & 0 & 0
\end{matrix}\right].
$
\vspace{.2cm}
\item  $\e_H^{P(y)}=\left[\begin{matrix}
	1 & a & c \\
	0 & 1 & b \\
	0 & 0 &1 \
\end{matrix}\right].$
\vspace{.2cm}

\item $\e_H^{R(y)}=\left[\begin{matrix}
	1 & a r_{11} + b r_{21} & \frac{1}{2} \Bigl(2 a r_{13} + 2 b r_{23} + b^2 r_{21} (r_{22} - r_{33}) + 2 c r_{33} + a^2 r_{12} (r_{11} + r_{33}) \\
	& & \quad + a b (r_{12} r_{21} + r_{11} (r_{22} - r_{33}) + (-1 + r_{22}) r_{33})\Bigr) \\
	0 & 1 & a r_{12} + b r_{22} \\
	0 & 0 & 1
\end{matrix}\right].$
\end{enumerate}

Hence, we now have a more standard form of Corollary \ref{algebratogroup}, as follows
\begin{thm}\label{RBOHLG}
Let  $R=\left[\begin{matrix}
	r_{11} & r_{12} & r_{13} \\
	r_{21} & r_{22} & r_{23} \\
	0 & 0 & r_{33} \
\end{matrix}\right]$ be any Rota--Baxter operator over $\mathfrak{h}$. Then it induces a Rota--Baxter operator over $H$, given by the mapping 
$$ \left[\begin{matrix}
	1 & a & c \\
	0 & 1 & b \\
	0 & 0 &1 \
\end{matrix} \right] \mapsto \left[\begin{matrix}
1 & a r_{11} + b r_{21} & \frac{1}{2} \Bigl(2 a r_{13} + 2 b r_{23} + b^2 r_{21} (r_{22} - r_{33}) + 2 c r_{33} + a^2 r_{12} (r_{11} + r_{33}) \\
& & \quad + a b (r_{12} r_{21} + r_{11} (r_{22} - r_{33}) + (-1 + r_{22}) r_{33})\Bigr) \\
0 & 1 & a r_{12} + b r_{22} \\
0 & 0 & 1
\end{matrix}\right].$$
\end{thm}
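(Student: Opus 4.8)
The plan is to combine the abstract correspondence of Corollary \ref{algebratogroup} with the explicit matrix computations recorded immediately before the statement. By Corollary \ref{algebratogroup}, the induced operator $\mathcal{R}$ on $H$ is determined by the rule $\mathcal{R}(\e_H^{P(x)}) = \e_H^{R(x)}$, where $P(x) = x + \frac{[R(x),x]}{2}$. The content of the theorem is merely to re-express this rule with a \emph{generic} element of $H$ as input, so the proof reduces to inverting $P$ and $\e_H$ and then collecting matrix entries; no new structural argument is required.

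First I would invoke Theorem \ref{RBmatrix} together with the bijectivity of $\e_H$ to observe that every element of $H$ is uniquely of the form $\e_H^{P(y)}$ for some $y \in \mathfrak{h}$. Thus, given a generic group element $g \in H$ with super-diagonal entries $a, b$ and corner entry $c$, it suffices to produce the unique $y \in \mathfrak{h}$ satisfying $\e_H^{P(y)} = g$ and then to evaluate $\mathcal{R}(g) = \e_H^{R(y)}$.

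Next I would exhibit this $y$ explicitly. Inverting the formula \eqref{RBeqn} for $P$ and the exponential \eqref{expmapexpression} forces the $Z$-component of $y$ to absorb the quadratic correction terms, which yields precisely the element $y$ displayed before the statement. Substituting $y$ into \eqref{expmapexpression} and using the stated form of $P(y)$ then verifies $\e_H^{P(y)} = g$, matching item (3) of the pre-statement computation.

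Finally, I would read off $R(y)$ from the matrix of $R$ acting on $y$ and apply \eqref{expmapexpression}, whose only nonlinear effect on a strictly upper-triangular $3\times 3$ matrix is the $ab/2$-type correction appearing in the corner entry. This produces the claimed image matrix (item (4)). The argument involves no genuine obstacle; the only delicate point is the routine bookkeeping in the corner entry, where the $Z$-coefficient of $y$ must be tracked as it passes through $R$ (where it is scaled by $r_{33}$) and then through the quadratic term of the exponential. Since each of $\e_H$, $P$, and the correspondence of Corollary \ref{algebratogroup} is already established, assembling the collected entries completes the proof.
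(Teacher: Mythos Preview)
Your proposal is correct and matches the paper's own argument: the theorem is stated as an immediate reformulation of Corollary \ref{algebratogroup}, obtained by choosing $y$ so that $\e_H^{P(y)}$ is the generic element of $H$ and then reading off $\e_H^{R(y)}$ from the computations already displayed. There is no additional structural content in the paper's proof beyond the bookkeeping you describe.
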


Next, we exhibit all skew left brace structures induced by these Rota--Baxter operators on $H$, as given by the formula in Theorem \ref{VVSB}.

Let  $R=\left[\begin{matrix}
	r_{11} & r_{12} & r_{13} \\
r_{21} & r_{22} & r_{23} \\
	0 &0 & r_{33} \\
\end{matrix}\right]$   and 
$y_1 =\left[ \begin{matrix}
	0 & a_1 & c_1-\frac{1}{2}\big(a_1b_1-a_1b_1r_{22}+a_1b_1r_{11}+b_1^2r_{21}-a_1^2r_{12}\big) \\
	0 & 0 & b_1 \\
	0 & 0 & 0 \\
\end{matrix}\right],$

$y_2=\left[ \begin{matrix}
	0 & a_2 & c_2 - \frac{1}{2}\big(a_2b_2 - a_2b_2 r_{22} + a_2 b_2 r_{11} + b_2^2 r_{21} - a_2^2 r_{12}\big) \\
	0 & 0 & b_2 \\
	0 & 0 & 0 \\
\end{matrix}\right].$
The multiplication operation of the descendant group of $\mathcal{R}$ is given by the following formula
$$\e_H^{P(y_1)} \circ_{\mathcal{R}} \e_H^{P(y_2)} = \e_H^{P(y_1)} \e_H^{R(y_1)} \e_H^{P(y_2)}\e_H^{-R(y_1)}.$$

Direct calculations using Mathematica \cite{math}, shows that   $\e_H^{P(y_1)} \e_H^{R(y_1)} \e_H^{P(y_2)}\e_H^{-R(y_1)}$ is equal to 
$$
\left[\begin{matrix}
	1 & a_1+a_2  & c_1+c_2 +a_1\big(b_2(1+r_{11})-a_2 r_{12} \big)+b_1 \big(b_2r_{21}-a_2r_{22}\big) \\
	
	0 & 1 & b_1+b_2 \\
	0 & 0 & 1
\end{matrix}\right].$$

\begin{remark}
 It is easy to see that if $M_R= \left[\begin{matrix}
	r_{11} & r_{12} \\
	r_{21} & r_{22}\\
\end{matrix}\right]$ is a null  matrix, then the skew left brace structure induced by $R$ is trivial. Please note that the descendant group $\mathcal{R}$ depends solely on how $R$ acts on $X$ and $Y$ in $\mathfrak{h}$.
\end{remark}

\begin{thm}\label{main}
Let \( P_i \) for \( 1 \leq i \leq 4 \) be the Rota–Baxter operators on $\mathfrak{h}$ as classified in Theorem \ref{eqrbo}, and let $\mathcal{P}_i$ be the Rota–Baxter operators induced by \( P_i \) on \( H \). Then, for \( a_j, b_j, \) and \( c_j \)  $( 1 \leq j \leq 2 )$ in \( \mathbb{C} \), the descendant group of $\mathcal{P}_i$ is given by:

$$ \left[\begin{matrix}
	1 & a_1 & c_1 \\
	0 & 1 & b_1 \\
	0 & 0 & 1 \
\end{matrix}\right] \circ_{\mathcal{P}_i}\left[ \begin{matrix}
1 & a_2 & c_2 \\
0 & 1 & b_2 \\
0 & 0 & 1 \
\end{matrix}\right]=\left[\begin{matrix}
1 & a_1+a_2 & q_i \\
0 & 1 & b_1+b_2 \\
0 & 0 &1 \
\end{matrix}\right],
$$
where,
\begin{align*}
q_1=\; &c_1+c_2 +a_1\big(b_2(1+\frac {(1+r_{22})r_{33}}{r_{22}-r_{33}})\big)-b_1 a_2r_{22},\\
q_2=\;& c_1+c_2 +a_1b_2(1+r_{11}) +b_1 a_2,\\
q_3=\;& c_1+c_2 +a_1b_2(1+r_{11}),\\
q_4=\; & c_1+c_2 +a_1\big(b_2(1+r_{11})-a_2  \big)-b_1 a_2r_{11}.
\end{align*}
\end{thm}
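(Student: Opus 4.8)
The plan is to specialize the general descendant-group multiplication formula derived just before the statement to each of the four canonical Rota--Baxter operators $P_i$ from Theorem \ref{eqrbo}. Recall that immediately prior to this theorem, we computed that for an arbitrary Rota--Baxter operator $R$ with matrix entries $r_{ij}$, the product $\e_H^{P(y_1)} \circ_{\mathcal{R}} \e_H^{P(y_2)}$ has upper-right entry
\[
c_1+c_2 +a_1\big(b_2(1+r_{11})-a_2 r_{12} \big)+b_1 \big(b_2r_{21}-a_2r_{22}\big),
\]
while the $(1,2)$ and $(2,3)$ entries are simply $a_1+a_2$ and $b_1+b_2$. The crucial observation, recorded in the remark preceding the theorem, is that the descendant group depends only on the action of $R$ on $X$ and $Y$, i.e.\ only on the four entries $r_{11}, r_{12}, r_{21}, r_{22}$ forming the block $M_R$, and is completely independent of $r_{13}, r_{23}, r_{33}$. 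This is exactly what makes the computation tractable.

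First I would read off the relevant block $M_{P_i}$ for each of the four operators in Theorem \ref{eqrbo}. For $P_1$ the block is diagonal with entries $r_{11}=\frac{(1+r_{22})r_{33}}{r_{22}-r_{33}}$, $r_{12}=r_{21}=0$, $r_{22}=r_{22}$; for $P_2$ it is diagonal with $r_{22}=-1$, $r_{12}=r_{21}=0$; for $P_3$ it is diagonal with $r_{22}=0$, $r_{12}=r_{21}=0$; and for $P_4$ it is the Jordan block with $r_{11}=r_{22}$ on the diagonal, $r_{12}=1$, $r_{21}=0$. Then I would substitute these values directly into the boxed general formula above. Each substitution is a one-line simplification: the vanishing of $r_{21}$ kills the $b_1 b_2 r_{21}$ term in every case, and the specific values of $r_{12}$ and $r_{22}$ collapse the remaining terms to the stated expressions $q_1,\dots,q_4$.

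Concretely, for $P_1$ (with $r_{12}=r_{21}=0$) the upper-right entry becomes $c_1+c_2+a_1 b_2(1+r_{11})-b_1 a_2 r_{22}$ with $r_{11}=\frac{(1+r_{22})r_{33}}{r_{22}-r_{33}}$, matching $q_1$; for $P_2$ (with $r_{22}=-1$, $r_{12}=r_{21}=0$) it becomes $c_1+c_2+a_1 b_2(1+r_{11})+b_1 a_2$, matching $q_2$; for $P_3$ (with $r_{22}=0$ and $r_{12}=r_{21}=0$) the $b_1 a_2 r_{22}$ term vanishes, giving $q_3$; and for $P_4$ (with $r_{12}=1$, $r_{21}=0$, $r_{22}=r_{11}$) it becomes $c_1+c_2+a_1(b_2(1+r_{11})-a_2)-b_1 a_2 r_{11}$, matching $q_4$. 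The fact that $P(y_i)$ was chosen so that $\e_H^{P(y_i)}$ is the clean matrix with entries $a_i, b_i, c_i$ (established in the computation preceding Theorem \ref{RBOHLG}) guarantees that the left-hand sides are exactly the standard Heisenberg matrices displayed in the statement, so no further reparametrization is needed.

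There is essentially no conceptual obstacle here; the work is purely verificational. The only point requiring mild care is bookkeeping: one must be certain that the general product formula was derived for inputs $\e_H^{P(y_1)}, \e_H^{P(y_2)}$ parametrized by $(a_i,b_i,c_i)$ rather than by the raw Lie-algebra coordinates, so that substituting the $r_{ij}$ of each $P_i$ yields the descendant multiplication in the desired coordinates. Since this parametrization is exactly the content of Corollary \ref{bijection} and the explicit matrices preceding Theorem \ref{RBOHLG}, the four claimed formulas follow by direct substitution, and I would present the proof as precisely these four specializations of the single master formula.
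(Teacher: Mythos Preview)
Your proposal is correct and is exactly the approach the paper takes: the paper derives the single master formula for the upper-right entry of the descendant product just before Theorem~\ref{main}, and the theorem itself is stated (without a separate proof) as the immediate specialization of that formula to the four blocks $M_{P_i}$. Your four substitutions are all accurate and reproduce the claimed $q_1,\dots,q_4$ verbatim.
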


\begin{ack}
	{\rm The author is very thankful to the anonymous referee for suggesting Theorem \ref{eqrbo} and for many helpful suggestions that significantly enhanced the presentation of this article. The author also expresses gratitude to IISER Mohali for providing the institute's postdoctoral fellowship and to Anoop Singh for his invaluable assistance in proving Theorem \ref{semiexp}.}
\end{ack}

\end{document}